\newtcolorbox{tbox}[1][]{%
    breakable,
    enhanced,
    colframe=blue,
    coltitle=white,
    #1
}
\newtheorem{introthm}{Theorem}
\newtheorem{theorem}{Theorem}[section]
\newtheorem{lemma}[theorem]{Lemma}
\newtheorem{proposition}[theorem]{Proposition}
\newtheorem{corollary}[theorem]{Corollary}
\newtheorem{conjecture}[theorem]{Conjecture}
\theoremstyle{definition}
\newtheorem{definition}[theorem]{Definition}
\newtheorem{example}[theorem]{Example}
\newtheorem{construction}[theorem]{Construction}
\newtheorem{remark}[theorem]{Remark}
\theoremstyle{remark}
\numberwithin{equation}{section}
\title{On blowing up minimal 
toric surfaces}
\author[A.~Laface]{Antonio Laface}
\address{
Departamento de Matem\'atica,
Universidad de Concepci\'on,
Casilla 160-C,
Concepci\'on, Chile}
\email{alaface@udec.cl}
\author[L.~Ugaglia]{Luca Ugaglia}
\address{
Dipartimento di Matematica e Informatica,
Universit\`a degli studi di Palermo,
Via Archirafi 34,
90123 Palermo, Italy}
\email{luca.ugaglia@unipa.it}
\subjclass[2010]{Primary 14M25; Secondary 14C20}
\keywords{Toric surfaces, Cox rings}
\thanks{Both authors have been 
partially supported by Proyecto
FONDECYT Regular 
n.~1230287 and by 
``Piano strategico per il miglioramento della qualit\`a della ricerca e dei risultati della VQR 2020-2024 - Misura A''
of the University of Palermo.
The second author is member of 
INdAM - GNSAGA}
\date{\today}
\begin{document}

\maketitle

\begin{abstract}
We prove that the Cox ring of the blowing-up
of a minimal toric surface of Picard rank two
is finitely generated. As part of our proof 
of this result we provide a necessary and 
sufficient condition
for finite generation of Cox rings of 
normal projective $\mathbb Q$-factorial
surfaces.
\end{abstract}

\section*{Introduction}

Given a complete toric variety $\mathbb P$
and a general point $e\in\mathbb P$,
it is an open problem to decide when the
Cox ring of the blowing-up ${\rm Bl}_e\mathbb P$
is finitely generated.
For example when $\mathbb P$ is a (fake) weighted 
projective plane, the problem of deciding non
finite generation of the Cox ring can be 
related to the Nagata conjecture for plane 
curves~\cite{ck}. More recently examples of
surfaces ${\rm Bl}_e\mathbb P$ with non-polyhedral
pseudoeffective cone have been given in~\cite{cltu}
and used to show that the pseudoeffective cone of
$\overline M_{0,n}$ is not polyhedral for $n\geq 10$.
In this paper we focus on minimal toric surfaces,
that is toric surfaces which do not contain
curves with negative self-intersection.
These are quotients, by a cyclic subgroup
of the big torus, of either $\mathbb P^2$
or of $\mathbb P^1\times\mathbb P^1$.
In the first case (fake weighted projective 
planes) there are
several known such surfaces $\mathbb P$
such that the Cox ring of 
${\rm Bl}_e\mathbb P$ is not finitely 
generated~\cites{ggk2021,ggk2019,ggkneg,gk,ck}.
Recently in~\cite{ggkopen} the authors provided 
examples where the effective cone of 
${\rm Bl}_e\mathbb P$ is open on one side.
Here our contribution is a necessary and sufficient
condition for the Cox ring
to be generated in multiplicity one
(see Definition~\ref{def:mul}),
generalizing~\cite{hkl2}*{Thm. 1.1}.
Our main result concerns the second case 
where we prove the following.
\begin{introthm}
\label{thm:main}
Let $\mathbb P$ be a minimal toric surface
of Picard rank two. Then the Cox ring of 
${\rm Bl}_e\mathbb P$ is finitely generated.
\end{introthm}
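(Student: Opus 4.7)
The plan is to combine an explicit classification of the surfaces with the finite-generation criterion established earlier in the paper.

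First I would classify the minimal toric surfaces $\mathbb{P}$ of Picard rank two. Any such $\mathbb{P}$ has a complete fan with four rays $\pm e_1, \pm e_2$ in a lattice $N \supseteq \mathbb{Z}^2$ of finite cyclic index $d$, and up to isomorphism one may write $N = \mathbb{Z}^2 + \mathbb{Z}\cdot\tfrac{1}{d}(a,b)$ subject to natural primitivity conditions on $(a,b,d)$; minimality is automatic here since opposite rays sum to zero. This realizes $\mathbb{P}$ as the quotient of $\mathbb{P}^1\times\mathbb{P}^1$ by a cyclic subgroup $G$ of the big torus and makes the Cox ring $\mathrm{Cox}(\mathbb{P})=k[x_1,\dots,x_4]$ completely explicit, graded by $\mathrm{Cl}(\mathbb{P})\cong\mathbb{Z}^2\oplus\mathbb{Z}/d\mathbb{Z}$ with the torsion part determined by $(a,b,d)$.

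Next I would apply the criterion to $\mathrm{Bl}_e\mathbb{P}$, taking as candidate generators the pullbacks of $x_1,\dots,x_4$, a variable $T_E$ for the exceptional divisor, and strict transforms of the irreducible negative curves through $e$. Verifying the criterion reduces to two tasks: (i) showing that the set of negative curves on $\mathrm{Bl}_e\mathbb{P}$ is finite, and (ii) checking the multiplicity-one condition in each relevant class, namely that every such class already admits a section in the subalgebra spanned by the candidate generators.

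To establish (i), I would lift to the quasi-étale Galois cover $\pi\colon\mathbb{P}^1\times\mathbb{P}^1\to\mathbb{P}$: the preimage of a general $e$ is a free $G$-orbit $\{\tilde e_1,\dots,\tilde e_d\}$, and irreducible negative curves on $\mathrm{Bl}_e\mathbb{P}$ pull back to irreducible $G$-invariant negative curves on the blowup of $\mathbb{P}^1\times\mathbb{P}^1$ along this orbit. The decisive point is that a $G$-orbit sits in very special position with respect to the two $G$-invariant rulings, so only finitely many bidegrees $(m,n)$ can support a $G$-invariant curve vanishing at every orbit point to the multiplicity required for the strict transform to be negative. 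I expect the main obstacle to lie here: carrying out this enumeration rigorously requires a case-by-case Riemann--Roch and intersection-theoretic analysis depending on $(a,b,d)$, together with an argument that irreducibility is preserved. Once (i) is in hand, (ii) reduces to a Zariski-decomposition computation class by class, and finite generation of $\mathrm{Cox}(\mathrm{Bl}_e\mathbb{P})$ follows from the criterion.
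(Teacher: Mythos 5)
Your outline has the right general shape---normalize the fan, identify the negative curves on ${\rm Bl}_e\mathbb P$, and feed them into a finite-generation criterion---but both of the steps you label (i) and (ii) are precisely where the mathematical content lives, and neither is carried out. For (i), the assertion that ``only finitely many bidegrees $(m,n)$ can support a $G$-invariant negative curve'' is not a routine Riemann--Roch estimate: finiteness of negative curves can fail on blowups of surfaces (nine general points on $\mathbb P^2$ already give infinitely many $(-1)$-curves), and for the closely related fake weighted projective planes the Cox ring of the blowup is in many cases \emph{not} finitely generated, so any argument must exploit the specific structure of the $\mathbb P^1\times\mathbb P^1$ quotient. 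Moreover, finiteness of the set of negative curves is not by itself enough; you must show that these curves, together with $E$ and the two classes $H_1,H_2$ of self-intersection zero, actually generate ${\rm Eff}({\rm Bl}_e\mathbb P)$, i.e.\ that their cone contains the positive light cone. The paper does this by writing the rays as $(1,0),(p,q),(-1,0),(-p,-q)$, taking the convergents $a_i/b_i$ of the continued fraction of $p/q$, showing the candidate negative curves are exactly the strict transforms $C_i=\beta_iH_1+b_iH_2-E$ of the corresponding one-parameter subgroups, and using the recurrences for $(a_i,b_i,\beta_i)$ to prove that consecutive classes satisfy $C_i\cdot C_{i+1}=0$ and that the intersection form is negative semi-definite on each facet ${\rm Cone}(C_i,C_{i+1})$ or ${\rm Cone}(C_i,C_{i+2})$; this is what Proposition~\ref{lem:eff} converts into polyhedrality of the effective cone. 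None of this is replaced by anything in your proposal.

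For (ii), what you propose---that every relevant class ``already admits a section in the subalgebra spanned by the candidate generators,'' i.e.\ generation of the Cox ring in multiplicity one---is strictly stronger than what is needed and is, in this paper, only a conjecture (Conjecture~\ref{con:m1}; Corollary~\ref{cor:m1} is conditional on it). The paper deliberately avoids exhibiting generators: by Proposition~\ref{lem:mah} it suffices to produce a pseudogenerating tuple, i.e.\ to check that each extremal nef class stays inside the cone generated by the remaining curve classes after deleting any curve orthogonal to it, which forces semiampleness via the stable-base-locus argument. That verification hinges on the orthogonality relations $C_i\cdot C_{i+1}=0$ coming from Lemma~\ref{lem:rec}(ii). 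So you should either supply the explicit enumeration of negative curves and the semi-definiteness computation, or reformulate (ii) in terms of semiampleness of the extremal nef rays; as written, the proposal defers the decisive arguments and, in one place, targets an open problem.
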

Our strategy for proving this result is
the following. First of all we characterize 
the strict transforms of
one-parameter subgroups that are negative curves.
Then we show that these classes, together 
with the exceptional divisor and the pullback
of two classes of self-intersection $0$ 
generate the effective cone. 
Finally we provide a necessary and sufficient condition for the 
finite generation of the Cox ring of 
surfaces with a polyhedral effective cone
(Proposition~\ref{lem:mah}).
In Theorem~\ref{thm:eff} we show that 
${\rm Eff}({\rm Bl}_e\mathbb P)$ is generated
in multiplicity one but we don't know if the
same is true for the Cox ring. 
In Theorem~\ref{thm:gen} we provide
a necessary condition for the Cox ring of 
the blowing-up of a toric surface to be 
generated in multiplicity one.
After performing an experimental analysis 
with the computer algebra system 
Magma~\cite{mag}
we conjecture that the condition is also 
sufficient (Conjecture~\ref{con:m1})
and we prove it when the lattice
ideal of the toric surface is Cohen-Macaulay 
(Theorem~\ref{thm:CM}).
Finally, 
assuming Conjecture~\ref{con:m1} to hold,
in Corollary~\ref{cor:m1} we prove that
if $\mathbb P$ is a minimal toric surface
of Picard rank two
then the Cox ring of 
${\rm Bl}_e\mathbb P$ is generated
in multiplicity $1$.

The paper is structured as follows. 
In Section~\ref{sec:pre} we introduce
two criteria that guarantee the 
finite generation of the effective cone and 
the Cox ring 
of a normal projective $\mathbb Q$-factorial
surface. In Section~\ref{sec:tor} we first
recall some definitions and results about
abelian monoids, lattice ideals and toric 
varieties $\mathbb P$ associated
to a fan $\Sigma$ in a free abelian group $M$.
We then focus on the case of toric surfaces,
showing first that the classes of 
of one-parameter subgroups
induce a fan structure on the nef cone 
${\rm Nef}(\mathbb P)$. Then 
in Theorem~\ref{thm:ts} we prove inclusions 
between finite subsets of 
$N:={\rm Hom}(M,{\mathbb Z})$ 
corresponding respectively to: negative 
curves on the blow-up ${\rm Bl}_e\mathbb P$, 
width directions
of Riemann-Roch polytopes of ample classes 
on $\mathbb P$, and a minimal binomial basis 
of the lattice ideal of $\mathbb P$.
In Section~\ref{sec:li} we prove a necessary
condition for generation of the Cox ring 
in multiplicity one, in terms of the 
lattice ideal of the toric surface and
we show that it is also sufficient when
the lattice ideal is Cohen-Macaulay
(Theorem~\ref{thm:CM}).
In Section~\ref{sec:min} we 
restrict to the case of minimal 
toric surfaces of Picard rank $2$: 
in Theorem~\ref{thm:eff} we show that 
the effective cone ${\rm Eff}({\rm Bl}_e
\mathbb P)$ is generated in multiplicity
$1$, and then we prove
Theorem~\ref{thm:main}. Finally,
in Section~\ref{sec:ex} we present
a library of Magma~\cite{mag} functions and 
we discuss in detail one example, 
providing evidence for Conjecture~\ref{con:m1}.

\vspace{3mm}

{\bf Acknowledgements.}
We would like to thank Ana-Maria Castravet 
for some useful discussions about the topics of
the paper.


\section{Preliminaries}
\label{sec:pre}
In this section we prove a criterion
for finite generation of the effective 
cone of a normal projective $\mathbb Q$-factorial
surface and a criterion for finite generation of 
the Cox ring.
Both results will be used in 
Section~\ref{sec:min} to prove our main result, 
i.e. Theorem~\ref{thm:main}.
In what follows we will adopt
the following notation: given
a tuple of Weil divisors classes
$\mathcal S := (D_1,\dots,D_n)$
on a projective variety $X$ with
finitely generated divisor class 
group ${\rm Cl}(X)$ we will denote
by ${\rm Cone}(\mathcal S)$ the
cone of ${\rm Cl}_{\mathbb R}(X)$
generated by the classes $D_1,
\dots,D_n$.
We will also denote by 
\[
 Q_X\subseteq {\rm Cl}_{\mathbb R}(X)
\]
the positive light cone of $X$, i.e.
the set of classes $D\in {\rm Cl}_{\mathbb R}(X)$
such that $D^2\geq 0$ and $D\cdot H\geq 0$,
for some ample class $H$.
\begin{proposition}
\label{lem:eff}
Let $X$ be a normal projective 
$\mathbb Q$-factorial surface with 
Picard rank $3$. If there exists
a tuple $\mathcal S := (D_1,\dots,D_n)$
of classes of irreducible
curves on $X$ such that 
$\dim({\rm Cone}({\mathcal S})) = 3$
and the
intersection form is negative semi-definite 
on ${\rm Cone}(D_n,D_1)$ and 
${\rm Cone}(D_i,D_{i+1})$, 
for $1\leq i\leq n-1$, 
then ${\rm Eff}(X) = {\rm Cone}(\mathcal S)$.
\end{proposition}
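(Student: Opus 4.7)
The inclusion ${\rm Cone}(\mathcal{S}) \subseteq {\rm Eff}(X)$ is automatic since each $D_i$ is effective; the content is the reverse inclusion. My plan is to reduce this to the single statement $Q_X \subseteq {\rm Cone}(\mathcal{S})$, and then invoke the self-duality of the positive light cone under the intersection pairing, which applies because ${\rm Cl}_{\mathbb R}(X)$ has signature $(1,2)$ by the Hodge index theorem.

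For the reduction, I would consider an arbitrary irreducible curve $C$ on $X$. Either $C$ coincides with some $D_i$, so that $[C]$ trivially lies in ${\rm Cone}(\mathcal{S})$, or $C$ differs from every $D_i$; in the latter case $C \cdot D_i \geq 0$ for all $i$ because two distinct irreducible curves meet non-negatively, and hence $[C]$ lies in ${\rm Cone}(\mathcal{S})^\vee$ with respect to the intersection pairing. Assuming $Q_X \subseteq {\rm Cone}(\mathcal{S})$, passing to duals and using the Lorentz self-duality $Q_X^\vee = Q_X$ gives ${\rm Cone}(\mathcal{S})^\vee \subseteq Q_X \subseteq {\rm Cone}(\mathcal{S})$, so $[C] \in {\rm Cone}(\mathcal{S})$. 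Since ${\rm Eff}(X)$ is generated by classes of irreducible curves, this yields the reverse inclusion.

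To establish $Q_X \subseteq {\rm Cone}(\mathcal{S})$ I would analyze the extreme rays of ${\rm Cone}(\mathcal{S})^\vee$. In Picard rank three the dual of a full-dimensional polyhedral cone has extreme rays in bijection with its facets: each facet ${\rm Cone}(D_i, D_{i+1})$ yields an extreme ray $\mathbb{R}_{\geq 0} N_i$, where $N_i$ is characterized by being orthogonal, with respect to the intersection form, to ${\rm Span}(D_i, D_{i+1})$ and by satisfying $N_i \cdot D_j \geq 0$ for the remaining $D_j$. Combining the hypothesis that the intersection form is negative semi-definite on the cone ${\rm Cone}(D_i, D_{i+1})$ with the irreducibility constraint $D_i \cdot D_{i+1} \geq 0$, a short case analysis on the signs of $D_i^2, D_{i+1}^2, D_i \cdot D_{i+1}$ upgrades negative semi-definiteness on the cone to negative semi-definiteness on the whole two-dimensional span ${\rm Span}(D_i, D_{i+1})$, forcing $D_i^2 D_{i+1}^2 - (D_i \cdot D_{i+1})^2 \geq 0$. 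Consequently $N_i^2 \geq 0$, so $N_i \in Q_X \cup (-Q_X)$; a sign argument then places $N_i$ in the forward component $Q_X$. Since $Q_X$ is convex, this gives ${\rm Cone}(\mathcal{S})^\vee \subseteq Q_X$, and taking duals yields ${\rm Cone}(\mathcal{S}) \supseteq Q_X^\vee = Q_X$.

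The main obstacle I anticipate is the sign step: confirming that the inward-normal sign of $N_i$ (imposed by $N_i \cdot D_j \geq 0$ for $j \neq i, i+1$) coincides with the forward light-cone sign (given by $N_i \cdot H > 0$ for an ample class $H$). This should follow by combining the cyclic structure, the positivity $D_j \cdot H > 0$ for every $j$, and the three-dimensionality of ${\rm Cone}(\mathcal{S})$, but the argument requires care in the case when several of the remaining $D_j$ are themselves negative curves rather than classes in $Q_X$.
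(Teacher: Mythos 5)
Your reduction step and the computation $N_i^2\geq 0$ are both correct, and your route to the containment $Q_X\subseteq{\rm Cone}(\mathcal S)$ is genuinely different from the paper's: the paper argues that the facets of ${\rm Cone}(\mathcal S)$ avoid the interior of $Q_X$, so by connectedness either $Q_X\subseteq{\rm Cone}(\mathcal S)$ or the two cones have disjoint interiors, and then excludes the second case using the Hodge index theorem together with $D_i\cdot D_j\geq 0$; you instead compute the facet normals of the dual cone. In particular your observation that negative semi-definiteness on the two-dimensional \emph{cone} ${\rm Cone}(D_i,D_{i+1})$ together with $D_i\cdot D_{i+1}\geq 0$ upgrades to negative semi-definiteness on the whole span is right, and it is exactly what forces $N_i^2\geq 0$. (Your reduction paragraph is essentially a proof of the result the paper cites as \cite{lu}*{Prop.~1.2}. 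Note also that both you and the paper tacitly identify the facets of ${\rm Cone}(\mathcal S)$ with the cones on consecutive pairs; the hypothesis only controls those cones, so this deserves a word if the $D_i$ are not in convex position.)

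The genuine gap is the sign step, which you flag but do not close, and it cannot be closed with the ingredients you list. Knowing $N_i^2\geq 0$ only places ${\rm Cone}(\mathcal S)^\vee$ inside $Q_X\cup(-Q_X)$, and a pointed cone can perfectly well have extreme rays in both nappes, in which case dualizing yields nothing; ``cyclic structure'' and three-dimensionality are linear-algebraic data and do not distinguish ${\rm Cone}(\mathcal S)$ from, say, its reflection across the light cone. The statement $N_i\in Q_X$ is precisely where the geometry must enter, and it is the exact analogue of the step where the paper rules out ${\rm Cone}(\mathcal S)\cap{\rm int}(Q_X)=\emptyset$. One way to finish: if $-N_i\in Q_X\setminus\{0\}$, then $-N_i$ is pseudo-effective (Riemann--Roch gives $Q_X\subseteq\overline{\rm Eff}(X)$) and $H\cdot(-N_i)>0$ for $H$ ample, so $-N_i$ is big and can be written as an ample plus an effective class; the inequalities $(-N_i)\cdot D_j\leq 0$ then force every $D_j$ to appear in that effective part, producing a strictly positive combination $F=\sum_j b_jD_j$ with $F\cdot D_j<0$ for all $j$. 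Since $D_j\cdot D_k\geq 0$ for $j\neq k$, the standard Perron--Frobenius argument shows the Gram matrix of the $D_j$ is then negative definite, contradicting the signature $(1,2)$ of the intersection form on the three-dimensional span. Without some argument of this kind (or the paper's separation plus Hodge-index argument) the proof is incomplete.
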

\begin{proof}

Since $D_i^2 \leq 0$ 
for any $i$, there exists a
subset $I\subseteq\{1,\dots, n\}$
such that $|I| \geq 3$ and
the extremal rays of the cone
${\rm Cone}(\mathcal S)$ are
$\{D_i \,:\, i\in I\}$.
Moreover the intersection form
is negative semi-definite on 
the facets of this cone, so that
either $Q_X\subseteq
{\rm Cone}(\mathcal S)$, or
${\rm Cone}(\mathcal S)$ does 
not intersect the interior of
$Q_X$. We claim that the latter 
is not possible. Indeed, given 
an extremal ray $D_i$ of 
${\rm Cone}({\mathcal S})$, by
the Hodge index theorem we have
that $D_i^{\perp}$ intersects
the interior of $Q_X$. This implies
that $D_i\cdot D_j < 0$
for any other extremal 
ray $D_j$ of 
${\rm Cone}({\mathcal S})$, a contradiction.

Therefore $Q_X\subseteq
{\rm Cone}(\mathcal S)$,
and we conclude by means
of~\cite{lu}*{Prop.~1.2}.
\end{proof}

\begin{definition}
\label{def:main}
Let $X$ be a normal projective $\mathbb Q$-factorial
surface with finitely generated divisor class 
group ${\rm Cl}(X)$.
We say that an unordered tuple
$\mathcal S$ consisting of elements of ${\rm Cl}(X)$
is a {\em pseudogenerating tuple}
if it consists of classes of irreducible
curves such that the following hold:
\begin{enumerate}
\item
$Q_X\subseteq{\rm Cone}(\mathcal S)$;
\item
for any $D\in {\rm Cl}(X)$ generating
an extremal ray of ${\rm Nef}(X)$ we have
\[
 D\in 
 \bigcap_{C \in \mathcal S\cap D^\perp}
 {\rm Cone}(\mathcal S \setminus C).
\]
\end{enumerate}
\end{definition}

\begin{proposition}
\label{lem:mah}
Let $X$ be a normal projective 
$\mathbb Q$-factorial surface 
with finitely generated divisor
class group ${\rm Cl}(X)$ of rank
at least $2$. 
Then the following are equivalent:
\begin{enumerate}
\item
$X$ admits a pseudogenerating tuple;
\item
the Cox ring of $X$ is finitely generated.
\end{enumerate}
\end{proposition}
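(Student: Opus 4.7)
Plan:
The plan is to use the standard characterization that, on a normal projective $\mathbb Q$-factorial surface $X$ with finitely generated class group of rank $\geq 2$, finite generation of the Cox ring is equivalent to $X$ being a Mori Dream Space, which for surfaces amounts to (a) polyhedrality of ${\rm Eff}(X)$ together with (b) semiampleness of every nef class. The two conditions in the definition of a pseudogenerating tuple are designed to encode exactly (a) and (b) respectively, so I would prove each implication by matching the corresponding condition to its geometric counterpart.

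For the implication (1)$\Rightarrow$(2), I would first combine condition (i) with~\cite{lu}*{Prop.~1.2} (exactly as in the proof of Proposition~\ref{lem:eff}) to conclude ${\rm Eff}(X) = {\rm Cone}(\mathcal S)$, so the effective cone, and by duality the nef cone, is polyhedral. To verify semiampleness of each extremal nef class $D$, I would use condition (ii) as follows: for each $C\in\mathcal S\cap D^\perp$ it supplies an integer $m\geq 1$ and an expression $mD = \sum_{D_i\neq C} a_i D_i$ with $a_i\in\mathbb N$, hence an effective divisor $E_C\in |mD|$ whose support is contained in $\mathcal S\setminus\{C\}$. The key observation is that the support of each $E_C$ lies entirely in $\mathcal S$, so any irreducible curve $C'$ not belonging to the tuple $\mathcal S$ is automatically disjoint from $E_C$. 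Since $D$ is nef, the stable base locus of $|mD|$ is contained in the union of irreducible curves orthogonal to $D$; combining the divisors $E_C$ obtained as $C$ varies over $\mathcal S\cap D^\perp$ and passing to a common multiple of $m$, this base locus contains no irreducible curve, hence is at most zero-dimensional. Standard semiampleness results on surfaces (Zariski's theorem when $D^2>0$, together with a direct analysis of the induced pencil when $D^2=0$) then give that $D$ is semiample.

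For the implication (2)$\Rightarrow$(1), I would start from a finite set of homogeneous generators $f_1,\dots,f_N$ of $\mathcal R(X)$, decompose each zero divisor $(f_i) = \sum_j n_{ij} C_{ij}$ into irreducible components, and take $\mathcal S$ to be the (unordered) multiset of classes $[C_{ij}]$. Condition (i) then holds since these classes generate ${\rm Eff}(X)$ as a cone. For (ii), given an extremal nef $D$ and $C\in\mathcal S\cap D^\perp$, semiampleness provides a section $s\in H^0(X,mD)$ not vanishing on $C$; expanding $s$ as a polynomial in the $f_i$ and choosing a monomial summand $\prod f_i^{k_i}$ that also does not vanish on $C$ (such a monomial must exist, since the restriction of $s$ to $C$ is a nonzero combination of the restrictions of the monomials), its zero divisor realizes $mD = \sum k_i n_{ij}[C_{ij}]$ as a non-negative integer combination of classes in $\mathcal S\setminus\{C\}$, where the multiset structure of $\mathcal S$ supplies the copies of each repeated class needed after removing the specific representative $C$.

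The main obstacle is the implication (2)$\Rightarrow$(1), specifically the bookkeeping of the multiset structure of $\mathcal S$: when an extremal ray of ${\rm Eff}(X)$ is spanned by a movable class with several distinct irreducible representatives (as for the fiber class of a fibration $X\to\mathbb P^1$), condition (ii) requires the class $[C]$ itself to remain available in $\mathcal S\setminus\{C\}$, and this forces inclusion of multiple copies of such a class. Verifying that the zero divisors of the chosen Cox ring generators always produce enough copies, in the counts needed to realize each cone-membership required by (ii), is the delicate technical point, though it follows once one observes that movable classes have $h^0\geq 2$ and hence contribute multiple generators whose general zero divisors are irreducible by Bertini.
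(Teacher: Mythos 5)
Your proposal is correct and follows essentially the same route as the paper's proof: condition (i) of Definition~\ref{def:main} together with the cited Prop.~1.2 of \cite{lu} gives ${\rm Eff}(X)={\rm Cone}(\mathcal S)$, condition (ii) is used to produce, for each $C\in\mathcal S\cap D^\perp$, an effective member of $|mD|$ avoiding $C$ so that the stable base locus of an extremal nef class is finite and Zariski--Fujita yields semiampleness, while the converse takes $\mathcal S$ from the divisors of a finite generating set of the Cox ring. The only differences are cosmetic: where you assert without reference that ${\bf B}(D)$ lies in the $D$-null locus for nef $D$, the paper invokes Nakamaye's theorem in the big case and a direct computation when $D^2=0$; and where you expand a section in the generators to verify condition (ii), the paper simply cites the moving-cone description ${\rm Mov}(X)=\bigcap_{C\in\mathcal S}{\rm Cone}(\mathcal S\setminus C)$ from \cite{adhl}.
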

\begin{proof}
We prove $(i)\Rightarrow (ii)$.
By Hu and Keel criterion (see~\cite{adhl})
the Cox ring of $X$ is finitely generated 
if and only if ${\rm Nef}(X)$ is generated
by finitely many semiample classes.
Let $\mathcal S\subseteq {\rm Cl}(X)$
be a pseudogenerating tuple.
The first assumption on $\mathcal S$
implies, by~\cite{lu}*{Prop.~1.2}
\footnote{The argument given for Picard 
rank at least three works for Picard 
rank two as well.}, that ${\rm Eff}(X) = {\rm Cone}(\mathcal S)$,
and in particular it
is finitely generated.
Thus ${\rm Nef}(X)$ is generated
by finitely many classes, being the dual
of ${\rm Eff}(X)$.
It remains to show that any extremal ray
of ${\rm Nef}(X)$ is generated by a 
semiample class. Let $D$ be a 
class which generates an extremal ray of 
${\rm Nef}(X)$. The inclusion 
${\rm Nef}(X)\subseteq \overline{\rm Eff}(X)
= {\rm Eff}(X) = {\rm Cone}(\mathcal S)$ 
implies that a multiple of $D$ is a 
non-negative linear combination of elements of 
$\mathcal S$. Thus, up to replace $D$ with 
this multiple,
we can write 
$D = \sum_{[C]\in S}m_C[C]$,
where $m_C\in\mathbb Z_{\geq 0}$ for any $m_C$,
and each $C\subseteq X$ is an irreducible curve.
It follows that
\[
 {\bf B}(D)\subseteq 
 \bigcup_{[C]\in \mathcal S}C,
\]
that is the stable base locus of $D$ is
contained in the union of all curves of 
$\mathcal S$.
If $D$ is big, then ${\bf B}(D)\subseteq 
D^\perp$ by~\cite{na}*{Thm. 1.1} and the
second assumption on $\mathcal S$ implies
that no curve of $\mathcal S\cap D^\perp$
can be in ${\bf B}(D)$.
If $D$ is not big, then $0 = D^2 = D\cdot 
\sum_{[C]\in S}m_CC = \sum_{[C]\in S}m_C(D\cdot C)$
implies that ${\bf B}(D)\subseteq D^\perp$
and again we conclude that no curve, whose class 
is in $\mathcal S\cap D^\perp$, 
can be in ${\bf B}(D)$.
Finally we deduce that ${\bf B}(D) = \emptyset$,
that is $D$ is semiample, by~\cite{la}*{Rem. 2.1.32}.

We prove $(ii)\Rightarrow (i)$.
Let $\mathcal S$ be the tuple of degrees
of a minimal generating subset of the Cox ring.
Observe that all the element of $\mathcal S$
are classes of irreducible curves of $X$.
The first condition of Definition~\ref{def:main}
holds because $Q_X\subseteq \overline{\rm Eff}(X)
= {\rm Eff}(X) = {\rm Cone}(\mathcal S)$,
where the inclusion is by Riemann-Roch,
the second equality is by hypothesis and
the first equality is because ${\rm Eff}(X)$
is polyhedral.
The second condition holds because
\[
 D\in 
 {\rm Nef}(X)
 = {\rm Mov}(X)
 =
 \bigcap_{C \in \mathcal S}
 {\rm Cone}(\mathcal S \setminus C)
 \subseteq
 \bigcap_{C \in \mathcal S\cap D^\perp}
 {\rm Cone}(\mathcal S \setminus C),
\]
where the first equality is by Hu and Keel
characterization of finite generation of 
the Cox ring and the second equality is by 
\cite{adhl}*{Prop. 3.3.2.3}.
\end{proof}

\begin{remark}
\label{rem:gen}
The above proposition provides a criterion
for finite generation of the Cox ring but
it does not give its generators. 
For instance, let us consider $X$ 
to be a del Pezzo surface
of degree $1$.
Since $X$ is a Mori dream space, the rays of
${\rm Eff}(X)$ are generated by 
classes of negative curves, which can only 
be $(-1)$-curves.
Therefore we can write ${\rm Eff}(X) = {\rm Cone}(\mathcal S)\supseteq Q_X$, where
$\mathcal S := \{[C] \, : \, 
C \text { is a $(-1)$-curve}\}$.
The extremal rays of the nef cone of $X$
are generated by either pullbacks of lines
(contracting $8$ disjoint $(-1)$-curves)
or by conic bundles. Thus $\mathcal S$
is a pseudogenerating tuple.
On the other hand the Cox ring 
of $X$ is generated by homogeneous elements
whose degrees are either classes of curves
in $\mathcal S$ or the anticanonical class 
$-K_X$.
\end{remark}

\begin{corollary}
\label{cor:4C}
Let $X$ be a normal projective 
$\mathbb Q$-factorial surface 
with divisor class group ${\rm Cl}(X)$
of rank $2$. Then the following are 
equivalent:
\begin{enumerate}
\item
the Cox ring of $X$ is finitely generated;
\item
there are irreducible curves $C_1,D_1,
C_2,D_2\subseteq X$ such that 
$C_i\cdot D_i = 0$ for $i=1,2$.
\end{enumerate}
\end{corollary}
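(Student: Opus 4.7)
The plan is to deduce both implications from Proposition~\ref{lem:mah} by extracting, or constructing, a pseudogenerating tuple from the four given curves. The reason the criterion simplifies so drastically in rank~$2$ is that the intersection form on $\mathrm{Cl}_{\mathbb R}(X)\cong\mathbb R^{2}$ has signature $(1,1)$, so the orthogonal $v^{\perp}$ of any nonzero class is one-dimensional; this collapses the perpendicularity combinatorics appearing in Definition~\ref{def:main}.

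For $(i)\Rightarrow(ii)$ I would start from the Mori-dream structure implied by finite generation: the two extremal rays of $\mathrm{Eff}(X)$ are generated by classes of irreducible curves $D_{1},D_{2}$, and the two extremal rays $N_{1},N_{2}$ of $\mathrm{Nef}(X)$ are semiample with $N_{j}\cdot D_{j}=0$ by rank-$2$ duality. To produce an irreducible $C_{j}$ with $[C_{j}]\in[D_{j}]^{\perp}=\mathbb R\cdot N_{3-j}$, I would take a general fibre of the fibration defined by $|mN_{3-j}|$ when $N_{3-j}^{2}=0$ (irreducible in characteristic zero), and an irreducible member produced by Bertini when $N_{3-j}^{2}>0$ (the linear system is base-point-free and birational onto its image for $m\gg 0$).

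For $(ii)\Rightarrow(i)$ I would, after possibly swapping $C_{i}\leftrightarrow D_{i}$, arrange $C_{i}^{2}\geq 0$, so that $C_{i}$ is nef. Hodge index together with $C_{i}\cdot D_{i}=0$ then leaves only two configurations for each pair: either (a) $C_{i}^{2}=D_{i}^{2}=0$ and $[C_{i}]=[D_{i}]$ lies on a boundary ray of $Q_{X}$, or (b) $C_{i}^{2}>0$ and $D_{i}^{2}<0$. In both cases $[D_{i}]$ generates an extremal ray of $\mathrm{Eff}(X)$: in (b) because $D_{i}$ is a negative irreducible curve, and in (a) because nefness of $D_{i}$ together with $D_{i}^{2}=0$ forces any decomposition $[D_{i}]=[A]+[B]$ into effective classes to satisfy $D_{i}\cdot A=D_{i}\cdot B=0$, which in rank~$2$ pins $[A],[B]$ onto the ray $\mathbb R_{\geq 0}[D_{i}]$. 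Assuming $[D_{1}]$ and $[D_{2}]$ generate distinct rays, I would set $\mathcal S=([C_{1}],[D_{1}],[C_{2}],[D_{2}])$ and verify the pseudogenerating conditions: condition~(i) is immediate from $Q_{X}\subseteq\mathrm{Eff}(X)=\mathrm{Cone}([D_{1}],[D_{2}])\subseteq\mathrm{Cone}(\mathcal S)$, and condition~(ii) follows because every extremal $N\in\mathrm{Nef}(X)$ lies on $[D_{i}]^{\perp}$ for some $i$ and is therefore proportional to $[C_{i}]$, so that $\mathcal S\cap N^{\perp}$ reduces to $\{[D_{i}]\}$ (or $\{[D_{i}],[C_{i}]\}$ with $[D_{i}]=[C_{i}]$ in case (a)), and deleting any such class from $\mathcal S$ leaves a cone still containing $N$.

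The main obstacle I anticipate is the borderline situation in which $[D_{1}]$ and $[D_{2}]$ coincide as rays of $\mathrm{Eff}(X)$; there $\mathcal S$ only directly witnesses one extremal ray of $\mathrm{Eff}(X)$, and one either needs to rule out that this can genuinely occur under the hypothesis, or enrich $\mathcal S$ using the nef information carried by the $C_{i}$. Modulo this case analysis, the proof is essentially a translation of the rank-$2$ duality between extremal rays of $\mathrm{Eff}$ and $\mathrm{Nef}$ into the vocabulary of Proposition~\ref{lem:mah}.
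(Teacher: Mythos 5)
Your proposal follows the same route as the paper: the forward implication takes the two extremal rays of ${\rm Eff}(X)$, generated by irreducible curves, and produces their orthogonal partners from the semiample classes on the boundary of ${\rm Nef}(X)$ (the paper phrases this as taking an irreducible component of a general fiber of the associated morphism; your Bertini argument is the cleaner way to handle the case where that class is big and the morphism is birational). The reverse implication feeds $\mathcal S=(C_1,D_1,C_2,D_2)$ into Proposition~\ref{lem:mah} exactly as you do, with the same Hodge-index dichotomy: either $C_i^2>0$ and $D_i^2<0$, or $C_i^2=D_i^2=0$ with the two classes spanning the same isotropic ray. Your verification of condition (ii) of Definition~\ref{def:main} matches the paper's.

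The obstacle you flag at the end is genuine, and you should know that the paper's own proof does not dispose of it either: after noting that each pair contains a curve of non-positive self-intersection, it simply asserts that condition (i) of Definition~\ref{def:main} holds, and that is exactly the step that fails in the degenerate configuration. Moreover the degenerate configuration cannot be ``ruled out under the hypothesis'' as literally stated. Take $X={\rm Bl}_e\mathbb P(a,b,c)$ for a weighted projective plane whose blow-up has non-finitely generated Cox ring (such surfaces exist, as recalled in the introduction, and have class group of rank $2$); let $D_1=D_2=E$ be the exceptional curve and let $C_1,C_2$ be pullbacks of irreducible curves avoiding $e$. Then $C_i\cdot D_i=0$ for $i=1,2$, so condition (ii) of the corollary holds verbatim while condition (i) fails: here ${\rm Cone}(\mathcal S)={\rm Cone}(E,H)$ does not contain $Q_X$. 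So the hypothesis must be read with the additional (and intended) requirement that $Q_X\subseteq{\rm Cone}(C_1,D_1,C_2,D_2)$, equivalently that the two non-positive members lie on opposite sides of $Q_X$ and hence that $[D_1]$, $[D_2]$ span the two distinct extremal rays of ${\rm Eff}(X)$ --- which is precisely what the forward implication produces. Once this is imposed your argument closes; without it, neither your write-up nor the paper's can be completed, and ``enriching $\mathcal S$'' is not an available repair since the implication is then simply false.
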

\begin{proof}
We prove (i) $\Rightarrow$ (ii).
If the Cox ring of $X$ is finitely
generated then ${\rm Eff}(X)$ is
polyhedral, so that its two extremal 
rays are generated by classes of 
irreducible curves $C_1$ and $C_2$.
The extremal rays of ${\rm Nef}(X)$
lie on $C_i^\perp$. 
If $R_i\in C_i^\perp$
is a nef divisor, then it is semiample
by the hypothesis. Thus, up to replace
$R_i$ with a positive multiple, we can
assume that the linear system $|R_i|$
defines a morphism $\varphi\colon 
X\to Y$ with connected fibers 
onto either a curve or a surface
\cite{la}*{Thm. 2.1.27}.
An irreducible component $D_i$ 
of a general fiber of $\varphi$ 
is disjoint from $C_i$.

We prove (ii) $\Rightarrow$ (i).
Let $\mathcal S := (C_1,D_1,C_2,D_2)$.
The equation $C_i\cdot D_i=0$ implies 
that for any $i$, either $C_i$ or $D_i$ 
has self-intersection $\leq 0$. 
Thus the first hypothesis on $\mathcal S$
of Definition~\ref{def:main} is satisfied.
If $C_i^2<0$, then $[D_i]\in 
{\rm Cone}(\{D_i\})\subseteq
{\rm Cone}(\mathcal S\setminus C_i)$.
If $C_i^2=0$, then $D_i^2=0$ and the two classes $[C_i]$, $[D_i]$ span the same
extremal ray $R$ of ${\rm Nef}(X)$.
In this case we have $[C_i],[D_i]\in 
R\subseteq 
{\rm Cone}(\mathcal S\setminus C_i)\cap
{\rm Cone}(\mathcal S\setminus D_i)$.
Therefore $\mathcal S$ is a 
pseudogenerating tuple for $X$, and
we conclude by means of Proposition~\ref{lem:mah}.
\end{proof}

\section{Toric surfaces}
\label{sec:tor}
In this section we are going to 
recall some definitions and 
to prove some facts about abelian
monoids, toric varieties, lattice
ideals and toric surfaces.
\subsection{Abelian monoids}
Recall that an {\em abelian monoid} 
is a pair $(S,+)$
consisting of a set $S$ and a binary 
operation 
$S\times S\to S$, 
which is commutative, associative and admits a neutral element $0\in S$.
The abelian monoid $S$ is {\em cancellative}
if $a+c = b+c$ implies $a=b$, where 
$a,b,c\in S$, while $S$ is {\em finitely 
generated} if there is a surjective 
homomorphism of monoids $\mathbb N^r\to S$.

\begin{definition}
Let $S$ be an abelian monoid.
An {\em ideal} $I\subseteq S$ of 
$S$ is a subset such that $i+s\in I$
for any $i\in I$ and $s\in S$.
Given a subset $U\subseteq S$, the ideal 
generated by $U$ is 
\[
 \langle U\rangle
 :=
 \{u+s\, :\, u\in U\text{ and }s\in S\}.
\]
A {\em basis} of an ideal 
$I\subseteq S$ is a subset $B\subseteq S$ 
such that $\langle B \rangle = I$.
One says that $B$ is a {\em minimal 
basis} for $I$ if $B$ is a basis but
no proper subset of $B$ is.
If the basis $B$ is finite, we say 
that the ideal $\langle B \rangle$ is
finitely generated.
\end{definition}
\begin{proposition}
\label{fin-bas}
Let $S$ be a finitely generated abelian
monoid. Then any minimal basis of an ideal
of $S$ is finite.
\end{proposition}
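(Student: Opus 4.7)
The plan is to reduce the statement to Dickson's lemma via the defining surjection $\pi\colon\mathbb{N}^r\to S$, and then to show that an arbitrary basis can always be shrunk to a finite one, so that minimality forces finiteness.

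First, I would fix an ideal $I\subseteq S$ and examine its preimage $\widetilde I:=\pi^{-1}(I)\subseteq\mathbb{N}^r$. Because $\pi$ is a monoid homomorphism and $I$ is an ideal of $S$, the set $\widetilde I$ is closed under addition by arbitrary elements of $\mathbb{N}^r$; that is, it is an ideal of the monoid $\mathbb{N}^r$. By Dickson's lemma, the set of componentwise-minimal elements of $\widetilde I$ is finite, say $m_1,\dots,m_k$. Any $y\in\widetilde I$ can then be written as $y=m_i+z$ for some $i$ and some $z\in\mathbb{N}^r$, hence $\pi(y)=\pi(m_i)+\pi(z)$. Consequently $G:=\{\pi(m_1),\dots,\pi(m_k)\}$ is a \emph{finite} basis of $I$.

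Next, let $B$ be any basis of $I$. Since each $\pi(m_i)$ lies in $I=\langle B\rangle$, I can choose $b_i\in B$ and $s_i\in S$ with $\pi(m_i)=b_i+s_i$, and set $B':=\{b_1,\dots,b_k\}\subseteq B$. For any $x\in I$, using that $G$ is a basis, I can write $x=\pi(m_i)+t$ for some index $i$ and some $t\in S$; then $x=b_i+(s_i+t)\in\langle B'\rangle$, so $B'$ is itself a basis of $I$. If $B$ is minimal, no proper subset can be a basis, so $B=B'$, and in particular $B$ is finite.

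The only nontrivial input is Dickson's lemma, and there is no real obstacle: everything else is bookkeeping about pulling ideals back along $\pi$ and the routine observation that every basis contains a finite subbasis, which combined with minimality gives the conclusion.
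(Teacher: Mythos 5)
Your proof is correct, and it takes a somewhat different route from the paper's. The paper also pulls everything back along $\pi\colon\mathbb{N}^r\to S$, but then argues by contradiction: an infinite minimal basis would produce an infinite strictly ascending chain of ideals in $S$, whose preimages form an infinite strictly ascending chain of ideals in $\mathbb{N}^r$, contradicting the ascending chain condition there (the cited Lem.~6.9 of the reference). You instead use the equivalent ``finitely many minimal elements'' form of Dickson's lemma to construct an explicit finite basis $G=\{\pi(m_1),\dots,\pi(m_k)\}$ of $I$, and then show that any basis $B$ contains a finite subbasis $B'$ obtained by choosing, for each generator in $G$, an element of $B$ below it; minimality then forces $B=B'$. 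Both arguments rest on the same finiteness property of $\mathbb{N}^r$ (ACC for ideals and Dickson's lemma being equivalent), but yours is direct and constructive, exhibits a finite basis and in fact proves the slightly stronger statement that \emph{every} basis of an ideal contains a finite subbasis, whereas the paper's is shorter at the cost of being a pure existence/contradiction argument. All the steps you give (that $\pi^{-1}(I)$ is an ideal of $\mathbb{N}^r$, that $G$ generates $I$ via surjectivity of $\pi$, and the subbasis extraction using the paper's definition $\langle U\rangle=\{u+s : u\in U,\ s\in S\}$) check out.
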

\begin{proof}
By hypothesis there is a surjective 
homomorphism of monoids 
$\pi\colon\mathbb N^r\to S$.
Let $I\subseteq S$ be an ideal.
If a minimal basis of 
$I$ were not finite then
there would be an infinite chain of
ideals $I_1\subsetneq I_2\dots
\subsetneq I_n\subsetneq\cdots\subsetneq I$,
where all the inclusions are strict.
This would give an infinite chain of
ideals $\pi^{-1}(I_1)\subsetneq 
\pi^{-1}(I_2)\dots
\subsetneq \pi^{-1}(I_n)
\subsetneq\cdots\subsetneq \pi^{-1}(I)$
where all the inclusions are strict.
A contradiction by~\cite{rg}*{Lem.~6.9}.
\end{proof}

We now provide an application of Proposition~\ref{fin-bas} to 
cones of divisor classes of algebraic
varieties.
Given a normal $\mathbb Q$-factorial  
algebraic variety $X$, denote by $C_X\subseteq
N_1(X)_{\mathbb R}$ the monoid of 
numerical classes of effective curves
of $X$.
The closure of the cone generated by 
$C_X$ is the Mori cone of $X$.

\begin{construction}
\label{con:fan}
Let $X$ be a normal $\mathbb Q$-factorial
algebraic variety such that the monoid 
$C_X$ is finitely generated. 
Given $x\in X$ denote by $I_x$ the
ideal of $C_X$ consisting of classes of 
curves which pass through $x$, and define
the function
\[
 \phi\,\colon\, {\rm Nef}(X)\to\mathbb R_{\geq 0},
 \qquad
 D\mapsto \min\{D\cdot\Gamma\, :\, 
 \Gamma\in I_x\}.
\]
Since $I_x$ admits a finite basis, the
function $\phi$ is the minimum of a 
finite number of linear functions.
It follows that $\phi$ is piece-wise 
linear and its domains of linearity 
are closed convex cones which form
a fan that we denote by $\Sigma_{\rm Nef}(X)$.
\end{construction}

\subsection{Toric Varieties}
Let $M$ be be a rank $n$ free abelian group and let $N := {\rm Hom}(M,\mathbb Z)$ be its dual.
We recall that a lattice polytope $\Delta\subseteq M\otimes_{\mathbb Z}\mathbb{Q}$
is the convex hull of a finite number of lattice points of $M$.

\begin{definition}
\label{rem:w}
Let ${\mathbb P}$ be a projective toric variety 
with lattice of one-parameter subgroups $N$
and let $e\in {\mathbb P}$ be a point in the big torus 
orbit.
Any $v\in N$ defines a curve 
$C_v\subseteq {\mathbb P}$ which is the 
closure of the image of the map $k^*
\to {\mathbb P}$, given by $t\mapsto t^v\cdot e$.
Let
\[
 {\rm OP}_{\mathbb P} :=\langle[C_v]\, :\,  v\in N\rangle
 \subseteq C_{\mathbb P},
\]
be the ideal of $C_{\mathbb P}$ generated by
the classes of one-parameter subgroups
through $e$. Observe that ${\rm OP}_{\mathbb P}\subseteq I_e$,
where $I_e$ is the ideal of $C_{\mathbb P}$
generated by classes of curves 
which contain $e$. Following 
Construction~\ref{con:fan} we define a 
piece-wise linear function
\[
 \phi^{\rm op}\, :\,  {\rm Nef}({\mathbb P})\to\mathbb R_{\geq 0},
 \qquad
 D\mapsto \min\{D\cdot\Gamma\, :\, 
 \Gamma\in {\rm OP}_{\mathbb P}\}
\]
whose domains of linearity
are convex cones which form a fan that we
denote by $\Sigma_{\rm nef}^{\rm op}({\mathbb P})$.
The inclusion ${\rm OP}_{\mathbb P}\subseteq I_e$ implies
that $\Sigma_{\rm nef}({\mathbb P})$ is a refinement
of $\Sigma_{\rm nef}^{\rm op}({\mathbb P})$.


\end{definition}

\begin{proposition}
If ${\mathbb P}$ is a projective toric surface 
then ${\rm OP}_{\mathbb P} = I_e$ and thus
$\Sigma_{\rm nef}({\mathbb P}) = \Sigma_{\rm nef}^{\rm op}({\mathbb P})$.
\end{proposition}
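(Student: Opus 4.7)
The plan is to establish the non-trivial inclusion $I_e\subseteq\mathrm{OP}_{\mathbb{P}}$; the reverse inclusion is automatic since $C_v$ contains $e$ by construction, so $[C_v]\in I_e$, and an ideal is closed under addition with arbitrary monoid elements. A further reduction: if a curve $C$ through $e$ is reducible, at least one of its irreducible components passes through $e$ while the other components contribute an effective class, so it suffices to show $[C]\in\mathrm{OP}_{\mathbb{P}}$ for every irreducible curve $C$ containing $e$. Set $D:=[C]$. Because $e$ lies in the big torus orbit and is therefore missed by every torus-invariant divisor, $C$ is not torus-invariant, and hence $\mathcal{O}_{\mathbb{P}}(D)$ must carry a section vanishing at $e$. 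Consequently $h^0(\mathcal{O}_{\mathbb{P}}(D))\geq 2$, equivalently the Riemann--Roch polytope $P_D\subseteq M_{\mathbb{R}}$ contains at least two distinct lattice points $m_1\neq m_2$.

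The heart of the argument is an explicit binomial construction. Since $M$ has rank two, pick the unique primitive $v\in N$ annihilating $m_1-m_2$, and form
\[
 s_0 \;:=\; \chi^{m_2}(e)\,\chi^{m_1} \;-\; \chi^{m_1}(e)\,\chi^{m_2} \;\in\; H^0(\mathcal{O}_{\mathbb{P}}(D)),
\]
a nonzero section that vanishes at $e$. Dividing on the big torus yields
\[
 \frac{s_0}{\chi^{m_2}} \;=\; \chi^{m_2}(e)\bigl(\chi^{m_1-m_2}-\chi^{m_1-m_2}(e)\bigr),
\]
which is identically zero along the orbit $\lambda_v(k^*)\cdot e = C_v\cap T$ because $\langle v,m_1-m_2\rangle=0$. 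As $\chi^{m_2}$ has no zeros on the torus, this yields $\mathrm{ord}_{C_v}(s_0)\geq 1$, hence $\mathrm{div}(s_0)=C_v+\Gamma$ with $\Gamma$ effective, and $D=[C_v]+[\Gamma]\in\mathrm{OP}_{\mathbb{P}}$. The equality of fans $\Sigma_{\rm nef}(\mathbb{P})=\Sigma_{\rm nef}^{\rm op}(\mathbb{P})$ is then immediate from $I_e=\mathrm{OP}_{\mathbb{P}}$, since the functions $\phi$ and $\phi^{\rm op}$ become minima over the same ideal.

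The main technical point to verify carefully is that the order of vanishing of $s_0$ along $C_v$ stays positive after the division by $\chi^{m_2}$. The essential observation is that $\chi^{m_2}$ is invertible on the big torus, so its order of vanishing along any curve meeting the torus is zero; the remaining factor vanishes on $C_v$ by the orbit characterisation of level sets of $\chi^{m_1-m_2}$. The argument is insensitive to whether $m_1-m_2$ is primitive: in the non-primitive case the level set through $e$ splits into several torus-translates of $C_v$, but $C_v$ itself remains one of its components, and that is all that is required for the divisorial inclusion.
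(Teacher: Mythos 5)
Your proof is correct, but it takes a genuinely different route from the paper's. The paper passes to the characteristic space $p\colon\widehat{\mathbb P}\to{\mathbb P}$: a curve through $e$ yields a Cox-ring element of the ideal of the quasitorus orbit $p^{-1}(e)$, that ideal is generated by (homogeneous) binomials, and each such binomial has degree equal to the class of a one-parameter subgroup, so the class of the curve is a one-parameter class plus an effective class. You instead argue inside the linear system of the irreducible curve $C\ni e$: since $H^0(\mathcal O_{\mathbb P}(D))$ is spanned by characters and $C$ is a non-invariant member through $e$, the Riemann--Roch polytope has two lattice points $m_1\neq m_2$, and the explicit binomial section $\chi^{m_2}(e)\chi^{m_1}-\chi^{m_1}(e)\chi^{m_2}$ vanishes along $C_v$ for $v$ primitive in $(m_1-m_2)^\perp$, giving $[C]=[C_v]+[\Gamma]$ with $\Gamma$ effective. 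The underlying mechanism is the same --- a binomial through a torus point cuts out a one-parameter subgroup plus an effective residue --- but your version is self-contained at the level of polytopes and makes visible where the surface hypothesis enters (in rank two $(m_1-m_2)^\perp$ is a line, so the binomial's zero divisor actually contains the curve $C_v$; in higher dimension it would only be a hypersurface), whereas the paper's is shorter because it quotes the general fact that ideals of quasitorus orbits are binomial. Two minor points: there are two primitive vectors $\pm v$ annihilating $m_1-m_2$, not one, though $C_v=C_{-v}$ so nothing is lost; and the step $h^0(\mathcal O_{\mathbb P}(D))\geq 2$ deserves one more clause --- if $h^0=1$ the only sections are multiples of a single character, whose zero divisor is torus-invariant and hence misses $e$, contradicting the existence of the section cutting out $C$.
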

\begin{proof}
Let $p\,\colon \widehat {\mathbb P}\to {\mathbb P}$ 
be the characteristic space of ${\mathbb P}$ and 
let $\overline {\mathbb P}$ be the total space 
whose coordinate ring is the Cox ring
$\mathcal R({\mathbb P})$. Any curve of 
${\mathbb P}$ which passes through $e$ defines 
an element of the ideal sheaf of $e$ and thus 
an element of the ideal 
$I_{\mathbb P}
\subseteq \mathcal R({\mathbb P})$ of $p^{-1}(e)$
(see Subsection~\ref{subs:tor}). 
Since $p^{-1}(e)$
is a quasitorus orbit, it follows
that its defining ideal $I_{\mathbb P}$
is generated by binomials.
Each such binomial corresponds to 
a one-parameter subgroup of $X$.
The statement follows.
\end{proof}

\begin{definition}
\label{def:lw}
Given a {\em lattice direction}, i.e. a non zero primitive vector 
$v\in N$, let us denote respectively by $\min\langle\Delta,v\rangle$ and 
$\max\langle\Delta,v\rangle$ the minimum and the maximum of 
$\langle m,v\rangle$ for $m\in \Delta$. The {\em lattice width of} $\Delta$ {\em in the direction} $v$ can be defined as
\[
 {\rm lw}_v(\Delta) := \max\langle\Delta,v\rangle  - 
  \min\langle\Delta,v\rangle.
\]
The {\em lattice width of} $\Delta$ is defined as 
\[
 {\rm lw}(\Delta) := \min\{{\rm lw}_v(\Delta)\, :\,  v\in N\}.
\]
If $v\in N$ is such that ${\rm lw}_v(\Delta) = {\rm lw}(\Delta)$,
we say that $\pm v$ is a {\em width direction} for $\Delta$ and 
we denote by $\rm wd(\Delta) \subseteq N$ 
the set of width directions for $\Delta$. 
\end{definition}

Given a lattice polytope $\Delta$ denote
by $\Delta_{\pm} := \Delta -\Delta$, 
which is 
the Minkowski sum of $\Delta$ with $-\Delta$,
and by $\Delta_{\pm}^* := \{v\in 
N_\mathbb Q\, :\, 
\max\langle \Delta_{\pm},v\rangle\leq 1\}$ the polar polytope.

\begin{proposition}
\label{prop:w}
Let $\Delta\subseteq M_\mathbb Q$ be a 
lattice polytope. Then the following hold:
\begin{enumerate}
\item
${\rm lw}(\Delta) = \min\{m\in\mathbb Z_{>0}\, :\, |m\Delta_{\pm}^*\cap N|>1\}$. 
\item
${\rm wd}(\Delta) = {\rm lw}(\Delta)\cdot\Delta_{\pm}^*\cap N
\setminus \{0_N\}$.
\end{enumerate}
\end{proposition}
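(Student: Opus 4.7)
The plan is to reduce both equalities to a single identity for the support function of $\Delta_\pm$. By additivity of support functions under Minkowski sums, for every $v\in N_{\mathbb{Q}}$ one has
\[
\max\langle \Delta_\pm, v\rangle \;=\; \max\langle \Delta, v\rangle + \max\langle -\Delta, v\rangle \;=\; \max\langle \Delta, v\rangle - \min\langle \Delta, v\rangle,
\]
which for primitive $v_0 \in N$ equals ${\rm lw}_{v_0}(\Delta)$ and for $v = k v_0$ with $k\in\mathbb{Z}$ equals $|k|\,{\rm lw}_{v_0}(\Delta)$. Consequently $m\Delta_\pm^*\cap N$ is exactly the set of integer vectors whose (extended) lattice width is at most $m$.

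Writing $L := {\rm lw}(\Delta)$, I first pick a primitive width direction $v_0$: the identity yields $v_0 \in L\Delta_\pm^*\cap N$, so the intersection contains at least $\{0,v_0\}$, proving $|L\Delta_\pm^*\cap N| > 1$. Conversely, for any $0 < m < L$, a hypothetical nonzero $v = k v_0 \in m\Delta_\pm^*\cap N$ (with $v_0$ primitive and $k \geq 1$) would force
\[
k\,{\rm lw}_{v_0}(\Delta) \;\leq\; m \;<\; L \;\leq\; {\rm lw}_{v_0}(\Delta),
\]
which is impossible; hence $m\Delta_\pm^*\cap N = \{0\}$. This establishes part (i).

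For part (ii) the same bookkeeping delivers both inclusions. Every primitive width direction $v$ satisfies $\max\langle \Delta_\pm, v\rangle = {\rm lw}_v(\Delta) = L$, so $v \in L\Delta_\pm^*\cap N$. Conversely, for nonzero $v = k v_0 \in L\Delta_\pm^*\cap N$ with $v_0$ primitive and $k \geq 1$, the inequality $k\,{\rm lw}_{v_0}(\Delta) \leq L \leq {\rm lw}_{v_0}(\Delta)$ forces $k = 1$ and ${\rm lw}_{v_0}(\Delta) = L$, so $v = v_0$ is itself a primitive width direction. There is no genuine obstacle here; the whole proposition rests on the elementary observation that the support function of $\Delta_\pm$ computes the symmetric lattice width in every direction, together with the fact that strictly dilating a primitive vector strictly increases its width.
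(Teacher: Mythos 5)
Your proof is correct and follows essentially the same route as the paper: both hinge on the single identity $\max\langle \Delta_{\pm},v\rangle = \max\langle\Delta,v\rangle-\min\langle\Delta,v\rangle$, i.e.\ that the support function of $\Delta_{\pm}$ computes the directional width, and then read off both statements from the description of $m\Delta_{\pm}^*\cap N$ as the set of integer vectors of width at most $m$. The only difference is that you spell out explicitly the dilation argument showing that nonzero points of ${\rm lw}(\Delta)\cdot\Delta_{\pm}^*\cap N$ must be primitive, a step the paper leaves implicit.
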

\begin{proof}
We begin by observing that, given $v\in N$,
${\rm lw}_v(\Delta) = \max\langle \Delta_{\pm},v\rangle$. Indeed both
sides of the equality have the form 
$\langle u,v\rangle-\langle u',v\rangle
= \langle u-u',v\rangle$,
where $u,u'\in\Delta$. It follows that
$\max\langle \Delta_{\pm},v\rangle\geq 
{\rm lw}(\Delta)$ for any $v\in N\setminus\{0_N\}$. Equivalently the minimum $m\in\mathbb Z_{>0}$ such that $m\Delta_{\pm}^*\cap N
= \{v\in N\, :\, \max \langle \Delta_{\pm},v\rangle\leq m\}$
contains a non-zero class is ${\rm lw}(\Delta)$. This proves both statements.
\end{proof}

\begin{remark}
 \label{rem:boundary}
 The lattice points of ${\rm lw}(\Delta)\cdot\Delta_{\pm}^*\cap N
 \setminus \{0_N\}$ lie on the
 boundary of ${\rm lw}
 (\Delta)\cdot\Delta_{\pm}^*$.
 Moreover the above proposition implies
 that the set $\rm wd(\Delta)$ is finite. In~\cite{DMN} it is
 shown that $|\rm wd(\Delta)| \leq 3^n-1$,
 where $n = \dim(\Delta)$,
 and equality holds if and only if 
 $\Delta$ is equivalent to a multiple of
 the {\em standard lattice cube}, i.e. 
 the polytope with vertices $\pm e_1,\dots,
 \pm e_n$. In particular, if $n = 2$ and 
 $\Delta$ is not equivalent to a multiple 
 of the standard lattice cube, then 
 $|{\rm wd}(\Delta)| \leq 6$. 
\end{remark}

\begin{corollary}
\label{cor:3wd}
If a lattice polygon $\Delta$ has exactly $6$ width directions, say $\pm u,\, \pm v,\, 
\pm w\, \in N$,
then, after possibly changing a sign, $ u + v + w = 0_N$.
\end{corollary}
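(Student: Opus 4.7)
The plan is to use Proposition~\ref{prop:w} and Remark~\ref{rem:boundary} to reformulate the hypothesis geometrically: the centrally symmetric convex polygon $P := \mathrm{lw}(\Delta)\cdot\Delta_{\pm}^*$ has exactly six nonzero lattice points, namely $\pm u,\pm v,\pm w$, all lying on its boundary. From this picture I want to extract the linear relation among them.

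By the central symmetry of $P$, after possibly replacing each of $u,v,w$ by its negative, I may list the six points in counterclockwise cyclic order around $\partial P$ as $u,v,w,-u,-v,-w$. The crux is then to show that any two cyclically adjacent width directions form a $\mathbb Z$-basis of $N$. For the pair $(u,v)$, I would consider the closed triangle $T$ with vertices $0,u,v$. Since $P$ is convex, $T\subseteq P$, and the cyclic order guarantees that $\pm w$ and $-u,-v$ all lie outside the closed convex cone spanned by $u$ and $v$, hence outside $T$. Because $u$ and $v$ are primitive, the edges $[0,u]$ and $[0,v]$ carry no additional lattice points either, so $T$ contains no lattice points beyond its three vertices. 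Pick's theorem then forces $\det(u,v)=\pm 1$, and the chosen orientation fixes the sign to $+1$. The same argument yields $\det(v,w)=1$.

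The final step, which I expect to be the main obstacle, is to pin down $w$ in the basis $(u,v)$. Writing $w=au+bv$, the relation $\det(v,w)=1$ gives $a=-1$, while $\det(u,w)>0$ (again forced by the cyclic order $u,v,w$) gives $b\geq 1$. To rule out $b\geq 2$, I would observe that for every $b\geq 1$ the lattice point $-u+v$ lies on the segment $[-u,w]$ at parameter $1/b$, hence belongs to $P$ by convexity. A short case check, using that $(u,v)$ is a $\mathbb Z$-basis, shows that $-u+v$ coincides with one of $0,\pm u,\pm v,\pm w$ only when $b=1$; so $b\geq 2$ would produce a seventh nonzero lattice point of $P$, contradicting the hypothesis. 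Hence $b=1$, giving $w=v-u$, i.e.\ $u-v+w=0_N$. Replacing $v$ by $-v$ yields $u+v+w=0_N$, as required.
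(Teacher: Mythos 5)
Your proof is correct, but it follows a genuinely different route from the paper's. Both arguments start the same way, using Proposition~\ref{prop:w} and Remark~\ref{rem:boundary} to translate the hypothesis into the statement that the centrally symmetric polygon $P=\mathrm{lw}(\Delta)\cdot\Delta_{\pm}^*$ has exactly the six nonzero lattice points $\pm u,\pm v,\pm w$, all on $\partial P$. At that point the paper takes the convex hull $\Delta'$ of these seven lattice points, observes it is a centrally symmetric lattice polygon with one interior lattice point and six boundary lattice points, and appeals to the classification of lattice polygons with a single interior lattice point in \cite{Ra}*{Thm.~5}: the only such polygon that is centrally symmetric with six boundary points is the hexagon with vertices $\pm(1,0),\pm(0,1),\pm(1,1)$, from which the relation is read off. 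You instead give a self-contained elementary argument: order the six points cyclically as $u,v,w,-u,-v,-w$, use convexity of $P$ plus the fact that its only lattice points are the seven listed ones to see that the triangles $(0,u,v)$ and $(0,v,w)$ are empty, apply Pick's theorem to get $\det(u,v)=\det(v,w)=1$, write $w=-u+bv$ with $b\geq 1$, and exclude $b\geq 2$ by exhibiting $-u+v$ as an extra lattice point of $P$ on the segment $[-u,w]$. All the steps check out (in particular the case analysis showing $-u+v\in\{0,\pm u,\pm v,\pm w\}$ forces $b=1$). The trade-off is the usual one: the paper's proof is shorter but leans on an external classification theorem, while yours is longer but uses only Pick's theorem and convexity and would survive without the reference to \cite{Ra}.
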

\begin{proof}
If we denote by $m > 1$ the width of $\Delta$, by Proposition~\ref{prop:w}
the width directions ${\rm wd}(\Delta)$ are all the lattice points on the 
boundary of $m\Delta_{\pm}^*$.
The lattice polygon $\Delta'$ 
spanned by the lattice points of
$m\Delta_{\pm}^*$ 
is centrally symmetric and 
it contains exactly $6$ lattice points on its boundary. 
If we consider the list of 
lattice polygons with $1$ interior
lattice point given in~\cite{Ra}*{Thm.~5}, we see that the hexagon
with vertices $(\pm 1,0),\,(0,\pm 1),\, (\pm 1,\pm 1)$ is the only 
one which is centrally symmetric and
which contains $6$ lattice points on
its boundary. The claim follows.
\end{proof}


\begin{definition} Let $\Sigma$ be a fan and $\mathbb P := \mathbb P_\Sigma$
the corresponding toric variety.

\noindent We define the {\em set of width directions} of $\mathbb P$ as
\[
 {\rm wd}(\mathbb P) 
 := 
 \bigcup_{\substack{D\text{ ample} \\
\text{invariant}}}
 {\rm wd}(\Delta_D)
 \subseteq N.
\]
\end{definition}

\begin{theorem}
Let $\mathbb P$ be a $\mathbb Q$-factorial 
projective toric variety. 
Then the following hold:
\begin{enumerate}
\item
${\rm wd}(\mathbb P)$ is finite;
\item
${\rm wd}$ is constant on the 
relative interior of cones of
$\Sigma_{\rm Nef}^{\rm op}(\mathbb P)$.
\end{enumerate}
\end{theorem}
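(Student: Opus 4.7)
The plan is to prove (ii) first and deduce (i) from it. The key toric input is the identity $D\cdot [C_v] = {\rm lw}_v(\Delta_D)$, valid for every invariant ample Cartier divisor $D$ on $\mathbb P$ and every primitive $v\in N$. By Proposition~\ref{fin-bas} the ideal ${\rm OP}_{\mathbb P}$ admits a finite basis, which we can take to consist of classes of one-parameter subgroups. Combining the basis reduction with the toric identity yields
\[
 \phi^{\rm op}(D) = {\rm lw}(\Delta_D)
 \quad\text{and}\quad
 {\rm wd}(\Delta_D) = \{v\in N\setminus\{0\} \,:\, D\cdot [C_v] = \phi^{\rm op}(D)\},
\]
so that proving (ii) amounts to showing that the set on the right is independent of $D$ in the relative interior of each cone of $\Sigma_{\rm Nef}^{\rm op}(\mathbb P)$.

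Next I would fix a cone $\sigma$ of $\Sigma_{\rm Nef}^{\rm op}(\mathbb P)$. By definition of the fan, $\phi^{\rm op}$ coincides on $\sigma$ with a single linear form, realized as $D\mapsto D\cdot [C_{v_0}]$ for some $v_0\in N$. For an arbitrary $v\in N\setminus\{0\}$, the linear function $\psi_v(D) := D\cdot [C_v - C_{v_0}]$ is nonnegative on $\sigma$, because $D\cdot [C_v]\geq \phi^{\rm op}(D) = D\cdot [C_{v_0}]$ there. A nonnegative linear form on a convex cone that vanishes at a point of the relative interior of that cone must vanish on the whole cone. Hence for every $v$, either $v\in{\rm wd}(\Delta_D)$ for all $D$ in the relative interior of $\sigma$ or for none; this establishes (ii).

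For (i), it remains to observe that $\Sigma_{\rm Nef}^{\rm op}(\mathbb P)$ is a finite fan (again by Proposition~\ref{fin-bas}) and that the ample cone is covered by the relative interiors of its maximal cones. By (ii) the function $D\mapsto {\rm wd}(\Delta_D)$ takes only finitely many values on the ample cone, and each of these values is finite by the bound recalled in Remark~\ref{rem:boundary}. Thus ${\rm wd}(\mathbb P)$ is a finite union of finite sets.

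The main technical point of this plan is the toric dictionary $D\cdot [C_v] = {\rm lw}_v(\Delta_D)$, together with its correct interpretation for non-primitive $v\in N$ (which must be absorbed into the convention on the classes $[C_v]$ and into the choice of a basis of ${\rm OP}_{\mathbb P}$). Once this identification is in place, the rest of the argument reduces to a single elementary convex-geometry observation about nonnegative linear functionals vanishing at interior points of cones.
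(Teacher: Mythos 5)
Your argument is correct. Part (ii) is handled exactly as in the paper: on each cone of linearity of $\phi^{\rm op}$ the set of classes $[C_v]$ realizing the minimum is constant on the relative interior, which is precisely the nonnegative-linear-functional observation you spell out. For part (i), however, you take a genuinely different route. The paper proves (i) directly: it fixes a finite minimal basis $B$ of the ideal ${\rm OP}_{\mathbb P}$ (finite by Proposition~\ref{fin-bas}) and shows that every width direction $v$ of an ample invariant polytope satisfies $[C_v]\in B$, since otherwise $[C_v]=[C_u]+s$ with $[C_u]\in B$ and $s$ a nonzero effective class would give ${\rm lw}_v(\Delta_D)=D\cdot C_v>D\cdot C_u={\rm lw}_u(\Delta_D)$, contradicting minimality of the width; as $v\mapsto[C_v]$ is two-to-one on width directions, finiteness follows. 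You instead deduce (i) from (ii): the fan $\Sigma_{\rm Nef}^{\rm op}(\mathbb P)$ has finitely many cones, ${\rm wd}(\Delta_D)$ depends only on the cone whose relative interior contains $D$, and each individual ${\rm wd}(\Delta_D)$ is finite by Proposition~\ref{prop:w} and Remark~\ref{rem:boundary}. Both routes work; the paper's yields the sharper information that width directions land in the minimal basis, which is what is actually reused in the proof of Theorem~\ref{thm:ts}, whereas yours is softer and avoids the minimal-basis manipulation. Two small points: the ample cone is covered by the relative interiors of \emph{all} cones of the fan, not only the maximal ones (a class on a wall lies in the relative interior of the wall, not of a maximal cone) --- since your part (ii) applies to every cone this is harmless, but it should be stated that way; and, as you yourself flag, the dictionary $D\cdot C_v={\rm lw}_v(\Delta_D)$ is valid only for primitive $v$, which suffices because width directions are primitive by definition.
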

\begin{proof}
We prove $(i)$.
Observe that the monoid $C_{\mathbb P}$ 
is finitely generated since
a generating set consists of classes of
torus invariant irreducible curves.
The fiber of the map 
\[
 \varphi\colon\, {\rm wd(\mathbb P)}\to 
 {\rm OP}_{\mathbb P},
 \qquad
 v\mapsto [C_v]
\]
over $[C_v]$ consists of $\{v,-v\}$, in particular the map has finite fibers.
Let $B\subseteq {\rm OP}_{\mathbb P}$ be a minimal
basis.
Since by Proposition~\ref{fin-bas} 
$B$ is finite, in order to conclude it is
enough to show that the image of
$\varphi$ is contained in $B$.
This is equivalent to prove that 
given an ample invariant divisor $D$ 
of $\mathbb P$, if $v\in N$ is a width
direction for the corresponding lattice polytope $\Delta_D$, then $[C_v]\in B$.
Let us suppose on the contrary that 
$[C_v]\notin B$. By definition of minimal
basis, there exists $[C_u]\in B$ such that
$[C_v] = [C_u] + s$, where $s\in C_{\mathbb P}$ is 
the class of an effective curve. This
would imply
\[
{\rm lw}_v(\Delta_D) = 
D\cdot C_v > D\cdot C_u = {\rm lw}_u(\Delta_D),
\]
which contradicts the fact that $v$ is a
width direction for $\Delta_D$.

We prove $(ii)$. It suffices to
observe that for all the divisor 
classes in the relative
interior of a cone of $\Sigma_{\rm Nef}^{\rm op}(\mathbb P)$
the function $\phi^{\rm op}$ attains
its minimum on the same elements of $B$.

\end{proof}

\subsection{Toric surfaces}
\label{subs:tor}
\begin{definition}
Let $\mathbb P := \mathbb P_\Sigma$ be a projective 
toric variety, with fan 
$\Sigma\subseteq N_{\mathbb Q}$.
Given a cone $\sigma\in\Sigma$ we
denote by ${\rm Hbs}(\sigma)$ its
Hilbert basis and define 
\[
 {\rm Hbs}(\mathbb P) := \bigcup_{\sigma\in\Sigma}
 \{v\in N\, :\, v\in {\rm Hbs}(\sigma)\},
 \qquad
 {\rm Hbs}(\mathbb P)^\pm := 
 {\rm Hbs}(\mathbb P)\cap -{\rm Hbs}(\mathbb P).
\]
\end{definition}

\begin{definition}
Let $\mathbb P := \mathbb P_\Sigma$ be a projective 
toric surface, with fan 
$\Sigma\subseteq N_{\mathbb Q}$
and let $\pi\colon {\rm Bl}_e\mathbb P\to 
\mathbb P$
be the blowing-up at a general 
point $e\in \mathbb P$. Given $v\in N$
we denote by $\tilde C_v\subseteq
{\rm Bl}_e\mathbb P$ the strict transform of 
the curve $C_v\subseteq \mathbb P$.
We define 
\[
 {\rm Neg}(\mathbb P) := \{v\in N\, 
 :\, \tilde C_v^2 < 0\}.
\]
Observe that since the multiplicity of $C_v$
at $e$ is $1$, 
we can also write
${\rm Neg}(\mathbb P) = \{v\in N\, 
 :\, C_v^2 < 1\}$.
\end{definition}

The next definition requires more
preliminaries to be introduced.
Given $m\in \mathbb Z^n$
define $m^+ := (\max(m_i,0)\, :\, 1\leq i\leq n)$
and $m^- := (-\min(m_i,0)\, :\, 1\leq i\leq n)$,
so that both $m^+$ and $m^-$ have 
non-negative entries and $m = m^+-m^-$.
We denote by $x^m$ the Laurent monomial
$\prod_ix_i^{m_i}\in k[x_1^{\pm 1},
\dots,x_n^{\pm 1}]$. Given a subgroup $L\subseteq\mathbb Z^n$ its {\em lattice 
ideal} is 
\[
 I_L
 :=
 \langle x^{m^+} - x^{m^-}\, :\, m\in L\rangle.
\]
It is known~\cite{PS}*{Thm.~3.7} that a minimal 
binomial basis for a lattice ideal is 
essentially unique (up to signs) if 
the lattice ideal is not a complete 
intersection.
Given a projective toric variety
$\mathbb P := \mathbb P_\Sigma$, with fan 
$\Sigma\subseteq N_{\mathbb Q}$,
let $v_1,\dots,v_r\in N$ be the primitive 
generators of the one-dimensional
cones of $\Sigma$. 
Denote by $P_\Sigma^*\,\colon\, \mathbb Z^r\to
\mathbb Z^n$ the homomorphism defined 
by $e_i\mapsto v_i$ for $1\leq i\leq r$
and let $L_\Sigma\subseteq\mathbb Z^r$ 
be the image of the dual homomorphism 
$P_\Sigma$. In what follows we
will denote simply by 
$I_{\mathbb P}$ the lattice
ideal of the lattice $L_{\Sigma}$.

\begin{definition}
Let $\mathbb P := \mathbb P_\Sigma$ be a projective 
toric surface, with fan 
$\Sigma\subseteq N_{\mathbb Q}$
and let $B_\Sigma$ be a minimal basis 
of the lattice ideal $I_{\mathbb P}$.
We define
\[
 {\rm Lib} (\mathbb P) := \{v\in N\, 
 :\, m\in v^\perp\text{ and }
 x^{m^+} - x^{m^-}\in B_\Sigma\}.
\]
\end{definition}

\begin{remark}
Observe that ${\rm Lib} (\mathbb P)$ is uniquely
defined only if the lattice ideal is
not a complete intersection. For example 
if $\mathbb P$ is the projective plane
this is not the case.
In general a basis of a lattice 
$L\subseteq \mathbb Z^r$ does not 
give a basis $B_L$ of the lattice ideal $I_{\mathbb P}$ 
because a saturation step with respect to the product $x_1\cdots x_r$ of all variables is 
needed~\cite{ms}*{Lem.~7.6}.
On the other hand if $\Phi\,\colon\, 
\mathbb Q^r\oplus \mathbb Q^r
\to \mathbb Q^r$ is the difference map
$(a,b)\mapsto a-b$, then one can show 
that $B_L$ is a subset of the Hilbert 
basis $\mathscr B$ of the cone 
\[
 \sigma_L 
 := 
 \Phi^{-1}(L_{\mathbb Q})\cap\mathbb Q^{2n}_{\geq 0}.
\]
To prove this it is sufficient to show that,
given $I_{\mathscr B} := \langle x^{a}-x^{b}\, :\, 
(a,b)\in \mathscr B\rangle$, the equality
$I_{\mathbb P} = I_{\mathscr B}$ holds.
If $(a,b)\in\mathscr B$ then $m := a-b\in L$ 
and both $a$ and $b$ have non-negative entries,
so that $x^{a}-x^{b}\in I_{\mathbb P}$. This proves the 
``$\supseteq$'' inclusion. On the other hand, 
if $x^p-x^q\in I_{\mathbb P}$ then $(p,q)\in\sigma_L$,
so that $(p,q) = \sum_{(a,b)\in \mathscr B}
c_{(a,b)}(a,b)$, where each $c_{(a,b)}$ is a 
non-negative integer. Thus in the quotient
ring $k[x_1,\dots,x_n]/I_{\mathscr B}$ we
have the following
\[
 \bar x^p 
 = \prod_{(a,b)\in \mathscr B}\bar x^{ac_{(a,b)}}
 = \prod_{(a,b)\in \mathscr B}\bar x^{bc_{(a,b)}}
 = \bar x^q,
\]
which implies $x^p-x^q\in I_{\mathscr B}$,
proving the ``$\subseteq$'' inclusion.
\end{remark}

\begin{theorem}
\label{thm:ts}
Given a projective toric surface $\mathbb P := \mathbb P_\Sigma$,
the following inclusions hold
\[
 {\rm Neg}(\mathbb P)\subseteq 
 {\rm wd}(\mathbb P) \subseteq 
{\rm Lib} (\mathbb P).
\]
\end{theorem}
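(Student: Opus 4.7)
The plan is to prove each inclusion separately, building on the identification ${\rm OP}_{\mathbb P} = I_e$ from the preceding proposition together with the correspondence between binomials in $I_{\mathbb P}$ and one-parameter subgroup curves through $e$.

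For ${\rm Neg}(\mathbb P) \subseteq {\rm wd}(\mathbb P)$, given $v \in {\rm Neg}(\mathbb P)$, the classical negative-curve argument (if $[\tilde C_v] = \sum a_i[\Gamma_i]$ with each $\Gamma_i \neq \tilde C_v$ effective and $a_i \geq 0$, intersecting with $\tilde C_v$ would give $\tilde C_v^2 \geq 0$) shows that $\tilde C_v^2 < 0$ forces $[\tilde C_v]$ to span an extremal ray of the Mori cone of $S := {\rm Bl}_e\mathbb P$, so that $\tilde C_v^\perp$ cuts out a codimension-one face $F$ of ${\rm Nef}(S)$. Writing each $H \in F$ as $H = \pi^*D - kE$, orthogonality $H \cdot \tilde C_v = 0$ forces $k = D \cdot C_v$, and the nefness of $H$ against every $\tilde C_u$ reads $D \cdot C_u \geq D \cdot C_v$ for every primitive $u \in N$. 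Since $E \notin \tilde C_v^\perp$, the pushforward $\pi_*$ restricts to an injection on $F$, so $\pi_*(F)$ is a full-dimensional subcone of ${\rm Nef}(\mathbb P)$ and therefore meets the ample cone; for any ample $D$ so obtained, the identity $D \cdot C_u = {\rm lw}_u(\Delta_D)$ exhibits $v$ as a width direction of $\Delta_D$.

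For ${\rm wd}(\mathbb P) \subseteq {\rm Lib}(\mathbb P)$, let $v \in {\rm wd}(\mathbb P)$ with ample invariant witness $D$, and take the primitive generator $\hat m_0 \in M$ of the rank-one subgroup $v^\perp \cap M$. The binomial $f_v := x^{P_\Sigma(\hat m_0)^+} - x^{P_\Sigma(\hat m_0)^-} \in I_{\mathbb P}$ corresponds under the above correspondence to the curve $C_v$, and I aim to show $f_v \in B_\Sigma$. Otherwise, minimality of $B_\Sigma$ furnishes a reduction through some $f_{\hat m'} \in B_\Sigma$: monomially, this means $P_\Sigma(\hat m_0) = P_\Sigma(\hat m') + c$ for a nonzero $c \in \mathbb Z^r_{\geq 0}$. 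Under the correspondence this translates to an equality $[C_v] = [C_{v'}] + s$ in $C_{\mathbb P}$, with $v'$ primitive in $(\hat m')^\perp$ and $s$ the effective class of ${\rm div}(x^c)$. Intersecting with the ample $D$ then gives $D \cdot C_v > D \cdot C_{v'}$, contradicting that $v$ minimizes $D \cdot C_u$ over all primitive $u$.

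The hard part is the second inclusion: rigorously passing from an algebraic reduction of $f_v$ inside the lattice ideal to an additive decomposition inside the effective monoid $C_{\mathbb P}$. The key observation is that the correspondence between binomial generators of $I_{\mathbb P}$ and one-parameter subgroups is compatible with multiplication by a monomial on the ideal side and addition of the corresponding effective class on the monoid side, so a factorization $f_v = x^c \cdot f_{\hat m'} + \cdots$ mirrors exactly $[C_v] = [C_{v'}] + s$; once this bookkeeping is in place, the width property yields the desired contradiction.
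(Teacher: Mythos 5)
Your argument follows the paper's proof in both halves: for the first inclusion the paper likewise passes to the facet of ${\rm Nef}({\rm Bl}_e\mathbb P)$ dual to the extremal ray spanned by $[\tilde C_v]$ and pushes a nearby ample class forward to $\mathbb P$ to obtain a width witness via $D\cdot C_u={\rm lw}_u(\Delta_D)$, and for the second it likewise expresses the binomial of $C_v$ through the minimal basis, extracts a decomposition $[C_v]=[C_{v_i}]+(\text{effective})$, and kills the effective summand by pairing with the ample witness. The only quibble is that your reduction step should relate the positive and negative parts of the exponent vectors (one monomial of $f_v$ being divisible by a monomial of $f_{\hat m'}$) rather than the signed lattice vectors themselves as in $P_\Sigma(\hat m_0)=P_\Sigma(\hat m')+c$; the degree bookkeeping you then carry out is exactly the paper's.
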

\begin{proof}
We prove the first inclusion. Let us fix a lattice direction $v\in {\rm Neg}(\mathbb P)$, i.e. a $v\in N$ such that the class of the curve $\tilde{C}_v$ has negative self-intersection on the blowing up ${\rm Bl}_e\mathbb P$.
The class of $\tilde{C}_v$ spans an extremal ray of the effective cone ${\rm Eff}({\rm Bl}_e\mathbb P)$. 
Let us consider the facet of the nef cone 
${\rm Nef}({\rm Bl}_e\mathbb P)$
orthogonal to $\tilde{C}_v$ and let us fix a 
divisor $D$ whose class is in the relative 
interior of this facet. 
By construction $D\cdot \tilde C_v = 0$ and $D\cdot \tilde C_w > 0$ for any $w\neq v$. 
Given an ample divisor $A$ on ${\rm Bl}_e\mathbb P$
and a sufficiently small $\varepsilon 
\in {\mathbb Q}_{>0}$, the divisor
$D + \varepsilon A$ is ample and 
$\min\{(D + \varepsilon A)\cdot \tilde C_u\, :\, u\in N\}$ is attained at $\tilde C_v$.
Thus the pushforward of $D + \varepsilon A$ 
on $X$ is an ample divisor $H$ such that
${\rm lw}(\Delta_H) = {\rm lw}_v(\Delta_H)$. 

We prove the second inclusion.
Let $\{f_1,\dots,f_s\}$ be a minimal
binomial basis of the lattice ideal and
let $\pm v_1,\dots,\pm v_s\in N$ be the
corresponding directions.
Let $v\in {\rm wd}(\mathbb P)$ and let 
$x^a-x^b$ be a binomial for $C_v$.
Then $x^a-x^b = \sum_{i=1}^s
g_if_i$, where the $g_i$ are 
homogeneous polynomials. Thus $C_v$ is 
linearly equivalent to $C_{v_i}+E$, where
$E$ is the effective divisor defined by $g_i$.
Let $D$ be an ample divisor such that 
$v$ is a width direction of the polytope
$\Delta_D$. Then ${\rm lw}_v(\Delta_D)
= D\cdot C_v = D\cdot C_{v_i} + D\cdot E
= {\rm lw}_{v_i}(\Delta_D) + D\cdot E$
implies $D\cdot E = 0$, so that $E = 0$.
Thus $v = v_i\in {\rm Lib} (\mathbb P)$.


\end{proof}

\begin{remark}
    By~\cite{PS}*{Thm.~3.7}, if the 
    lattice ideal $I_{\mathbb P}$ is
    not a complete intersection, then
    we have the equality
    ${\rm Lib}(\mathbb P) = 
    {\rm Hbs}(\mathbb P)^{\pm}$. In particular for
    every direction $v\in {\rm wd}(\mathbb
    P)$ we have that both $v$ and $-v$
    belong to the Hilbert basis of a
    cone of $\Sigma$.
\end{remark}
Both inclusions in Theorem~\ref{thm:ts}
can be strict as shown in the following
example.

\begin{example}
Let $\mathbb P := \mathbb P_\Sigma$ be the toric surface whose 
fan $\Sigma\subseteq\mathbb Q^2$ is given in 
the first of the following pictures.

\begin{center}
\begin{tikzpicture}[scale=.5]
\tkzDefPoint(0,0){O}
\tkzDefPoint(0,1){A}
\tkzDefPoint(-1,2){B}
\tkzDefPoint(-1,-1){C}
\tkzDefPoint(3,-2){D}

\fill[gray!30] (O) -- (A) -- (B) -- cycle; 
\fill[gray!30] (O) -- (B) -- (C) -- cycle; 
\fill[gray!30] (O) -- (C) -- (D) -- cycle; 
\fill[gray!30] (O) -- (D) -- (A) -- cycle; 

\tkzDrawSegments(O,A O,B O,C O,D)
\tkzDrawPoints[size=2](O,A,B,C,D)
\draw[help lines,densely dotted] (-1.2,-2.2) grid (3.2,2.2);

\begin{scope}[xshift=6cm]
\tkzDefPoint(0,0){O}
\tkzDefPoint(0,1){A}
\tkzDefPoint(-1,2){B}
\tkzDefPoint(-1,-1){C}
\tkzDefPoint(3,-2){D}

\tkzDefPoint(0,1){P2}

\fill[gray!30] (O) -- (A) -- (B) -- cycle; 
\fill[gray!30] (O) -- (B) -- (C) -- cycle; 
\fill[gray!30] (O) -- (C) -- (D) -- cycle; 
\fill[gray!30] (O) -- (D) -- (A) -- cycle; 

\tkzDrawSegments(O,A O,B O,C O,D)
\tkzDrawSegments[color=red](O,P2)
\tkzDrawPoints[size=2](O,A,B,C,D)
\tkzDrawPoints[size=2, color=red](P2)
\draw[help lines,densely dotted] (-1.2,-2.2) grid (3.2,2.2);
\end{scope}

\begin{scope}[xshift=12cm]
\tkzDefPoint(0,0){O}
\tkzDefPoint(0,1){A}
\tkzDefPoint(-1,2){B}
\tkzDefPoint(-1,-1){C}
\tkzDefPoint(3,-2){D}

\tkzDefPoint(1,-1){P1}
\tkzDefPoint(0,1){P2}

\fill[gray!30] (O) -- (A) -- (B) -- cycle; 
\fill[gray!30] (O) -- (B) -- (C) -- cycle; 
\fill[gray!30] (O) -- (C) -- (D) -- cycle; 
\fill[gray!30] (O) -- (D) -- (A) -- cycle; 

\tkzDrawSegments(O,A O,B O,C O,D)
\tkzDrawSegments[color=green](O,P1 O,P2)
\tkzDrawPoints[size=2](O,A,B,C,D)
\tkzDrawPoints[size=2, color=green](P1,P2)
\draw[help lines,densely dotted] (-1.2,-2.2) grid (3.2,2.2);
\end{scope}

\begin{scope}[xshift=18cm]
\tkzDefPoint(0,0){O}
\tkzDefPoint(0,1){A}
\tkzDefPoint(-1,2){B}
\tkzDefPoint(-1,-1){C}
\tkzDefPoint(3,-2){D}

\tkzDefPoint(1,-1){P1}
\tkzDefPoint(0,1){P2}
\tkzDefPoint(1,0){P3}

\fill[gray!30] (O) -- (A) -- (B) -- cycle; 
\fill[gray!30] (O) -- (B) -- (C) -- cycle; 
\fill[gray!30] (O) -- (C) -- (D) -- cycle; 
\fill[gray!30] (O) -- (D) -- (A) -- cycle; 

\tkzDrawSegments(O,A O,B O,C O,D)
\tkzDrawSegments[color=blue](O,P1 O,P2 O,P3)
\tkzDrawPoints[size=2](O,A,B,C,D)
\tkzDrawPoints[size=2, color=blue](P1,P2,P3)
\draw[help lines,densely dotted] (-1.2,-2.2) grid (3.2,2.2);
\end{scope}
\end{tikzpicture}
\end{center}
The lattice ideal $I_\Sigma$ is
generated by the three binomials
$x_2x_3-x_4^3, x_1x_2x_4-x_3^2, x_1x_2^2-x_3x_4^2$
which corresponds to the three directions in
${\rm Lib} (\mathbb P) = \{\pm(0,1),\allowbreak
\pm(1,-1),\allowbreak \pm(1,0)\}$.
The grading matrix for the Cox ring of ${\mathbb P}$ is
\[
 \begin{bmatrix}
  0&5&4&3\\
  1&3&3&2
 \end{bmatrix}.
\]
The nef cone is ${\rm Nef}({\mathbb P}) = 
{\rm Cone}([3,2],[4,3])$ and $\Sigma_{\rm Nef}({\mathbb P})$
has two maximal cones:
$\sigma_1 := {\rm Cone}([3,2],[15,11])$ and 
$\sigma_2 := {\rm Cone}([15,11],[4,3])$.
The class $[15,11]\in \sigma_1\cap\sigma_2$ 
corresponds to the torus-invariant Weil divisor 
$D := 3D_3+D_4$. The divisor $3D$ is 
Cartier and its Riemann-Roch polytope
\[
 \Delta_{3D}
 =
 {\rm Polytope}((0,0),(3,6),(6,3),(-1,0))
\]
has width $6$, attained along the directions 
$\pm(0,1),\pm(1,-1)$. Polytopes corresponding 
to Cartier classes in the interior of 
$\sigma_1$ or of $\sigma_2$ admit just
one of these two width directions.
It follows that ${\rm wd}(\mathbb P) = \{\pm(0,1),\pm(1,-1)\}$.
Finally the curve $\tilde C_v$, with $v = (0,1)$,
is the only one with negative 
self-intersection ($\tilde C_v^2 = -2/5$),
so that ${\rm Neg}(\mathbb P) = \{\pm(0,1)\}$.
\end{example}

\section{Lattice ideals}
\label{sec:li}
As before, let $\mathbb P := \mathbb P_\Sigma$ be a projective 
toric variety with fan $\Sigma\subseteq
\mathbb Q^n$.
Denote by $R := \mathbb K[x_1,\dots,x_r]$
the Cox ring of $\mathbb P$. The Cox ring 
$\mathcal R({\rm Bl}_e\mathbb P)$ of 
${\rm Bl}_e\mathbb P$ is isomorphic to
a subalgebra of $R[t,t^{-1}]$, where the 
extra variable $t$ represents the exceptional 
divisor (see~\cite{hkl}*{Prop. 5.2}).
\begin{definition}
\label{def:mul}
We say that $\mathcal R({\rm Bl}_e\mathbb P)
\subseteq R[t,t^{-1}]$ is 
generated in multiplicity $m$ if it admits a set of 
homogeneous generators of the form $ft^{-i}$ with 
$0\leq i\leq m$ and $f\in R$.
\end{definition}
In this section we study the problem
of determining when the Cox ring of 
the blowing-up ${\rm Bl}_e\mathbb P$
is generated in multiplicity one,
in terms of the lattice ideal 
$I_{\mathbb P}$. We begin with
a result holding for projective
toric varieties and then
in the rest of the section 
we will restrict to the case of 
toric surfaces.

\begin{proposition}
\label{lem:ci}
Let $\mathbb P$ be an $n$-dimensional 
projective toric variety, let 
$R := \mathcal R(\mathbb P)$ and let
$I_{\mathbb P} = 
\langle f_1,\dots,f_n\rangle\subseteq R$
be a complete intersection. Then 
\[
 \mathcal R({\rm Bl}_e\mathbb P)
 \simeq
 R[s_1,\dots,s_n,t]/
 \langle f_i-ts_i\, :\, 1\leq i\leq n\rangle.
\]
In particular the Cox ring  
is generated in multiplicity $1$.
\end{proposition}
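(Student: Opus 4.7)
The plan is to identify $\mathcal R({\rm Bl}_e\mathbb P)$ explicitly with an extended Rees-type subalgebra of $R[t,t^{-1}]$ and then read off the stated presentation from the complete-intersection hypothesis. First, by \cite{hkl}*{Prop.~5.2}, I would describe the component of $\mathcal R({\rm Bl}_e\mathbb P)$ of degree $\pi^{*}D-m[E]$ (for $m\geq 0$) as the set of elements $f\,t^{-m}$ with $f\in R_D$ vanishing along $p^{-1}(e)$ to order at least $m$. Because $I_{\mathbb P}$ is a complete intersection it is unmixed, so its symbolic and ordinary powers coincide and the vanishing condition becomes $f\in I_{\mathbb P}^{m}$, giving
\[
 \mathcal R({\rm Bl}_e\mathbb P)\;=\;R[t]\;+\;\bigoplus_{m\geq 1}I_{\mathbb P}^m\,t^{-m}\;=\;R\bigl[\,t,\,f_1 t^{-1},\dots,f_n t^{-1}\,\bigr]\;\subseteq\;R[t,t^{-1}].
\]

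This makes the natural graded $R$-algebra map $\Phi\colon R[s_1,\dots,s_n,t]\to \mathcal R({\rm Bl}_e\mathbb P)$ sending $s_i\mapsto f_i t^{-1}$ and $t\mapsto t$ surjective, and since $\Phi$ kills every $f_i-t s_i$ it factors through $\bar B:=R[s_1,\dots,s_n,t]/\langle f_i-t s_i\rangle$. The core of the argument is then to show that the induced $\bar\Phi\colon\bar B\to \mathcal R({\rm Bl}_e\mathbb P)$ is injective. A clean reduction is to invert $t$: this eliminates $s_i=f_i/t$, producing isomorphisms $\bar B[t^{-1}]\simeq R[t,t^{-1}]\simeq \mathcal R({\rm Bl}_e\mathbb P)[t^{-1}]$ compatible with $\bar\Phi$, so that $\bar\Phi$ is injective as soon as the localization $\bar B\to\bar B[t^{-1}]$ is, which amounts to proving that $t$ is a non-zero-divisor on $\bar B$.

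I expect this last step to be the technical heart of the proof and the place where the complete-intersection assumption is essential. To carry it out, I would suppose $t g=\sum_i h_i(t s_i-f_i)$ in $R[s_1,\dots,s_n,t]$ and expand both sides as polynomials in $t$ with coefficients in $R[s_1,\dots,s_n]$. Comparing the constant-in-$t$ coefficient yields the syzygy $\sum_i h_i^{(0)} f_i=0$ in $R[s_1,\dots,s_n]$. Since $f_1,\dots,f_n$ is a regular sequence in $R$ and remains so in the flat extension $R[s_1,\dots,s_n]$, all first syzygies are Koszul, so I may write $h_i^{(0)}=\sum_{j\neq i}c_{ij}f_j$ with $c_{ji}=-c_{ij}$. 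Combining this antisymmetry with the identity
\[
 s_i f_j - s_j f_i \;=\; s_j(t s_i-f_i)-s_i(t s_j-f_j)\;\in\;\langle t s_k-f_k\rangle
\]
gives $\sum_i h_i^{(0)} s_i\in\langle t s_k-f_k\rangle$; rearranging the original relation and inducting on the $t$-degree of $g$ then yields $g\in\langle t s_k-f_k\rangle$, i.e.\ $\bar g=0$ in $\bar B$.

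Finally, once $\bar\Phi$ is established as an isomorphism, the ``in particular'' clause is immediate from the presentation: the generators $x_1,\dots,x_r$ come from $R$ and have multiplicity $0$, while each $s_i=f_i t^{-1}$ has multiplicity $1$, so $\mathcal R({\rm Bl}_e\mathbb P)$ is generated in multiplicity $1$ in the sense of Definition~\ref{def:mul}.
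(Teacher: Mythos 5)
Your proof is correct, but it follows a genuinely different route from the paper's. The paper does not unwind the graded pieces of $\mathcal R({\rm Bl}_e\mathbb P)$ at all: it verifies the two dimension conditions of \cite{hkl}*{Prop.~5.1} (namely $\dim(J+\langle t\rangle)=\dim R$ and $\dim(J+\langle t,x_i\rangle)<\dim R$ for every variable $x_i$, the latter using that $I_{\mathbb P}$ is graded-prime) to conclude that $\mathcal R({\rm Bl}_e\mathbb P)\simeq S/J^{\rm sat}$ with $J^{\rm sat}=J:\langle t\rangle^\infty$, and then shows $J=J^{\rm sat}$ by a dimension count: a hypothetical $g\in J^{\rm sat}\setminus J$ would force $\dim S/(J^{\rm sat}+\langle t\rangle)\leq\dim R-1$, contradicting $\dim S/J^{\rm sat}=\dim R+1$. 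You instead identify $\mathcal R({\rm Bl}_e\mathbb P)$ with the extended Rees algebra via symbolic powers and prove injectivity of $\bar\Phi$ by showing that $I_{\mathbb P}$ is of linear type, using the Koszul syzygies of the regular sequence $f_1,\dots,f_n$ --- the classical argument that the Rees algebra of an ideal generated by a regular sequence coincides with its symmetric algebra. Your route is more explicit and makes transparent why $t$ is a non-zero-divisor (your syzygy computation in fact shows directly that $tg\in J$ implies $g\in J$, with no induction needed), while the paper's route is shorter given the machinery of \cite{hkl} and sidesteps symbolic powers entirely. Two small points to tighten, neither of which is a gap: first, the justification ``$I_{\mathbb P}$ is unmixed, so symbolic and ordinary powers coincide'' is not quite the right implication, since unmixedness of $I_{\mathbb P}$ alone does not yield $I_{\mathbb P}^{(m)}=I_{\mathbb P}^m$; what you need is that for an ideal generated by a regular sequence the associated graded ring is a polynomial ring over $R/I_{\mathbb P}$, so every power $I_{\mathbb P}^m$ is unmixed, and this does give the equality of symbolic and ordinary powers. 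Second, you should record why $f_1,\dots,f_n$ is a regular sequence: $I_{\mathbb P}$ has height $n$ in the Cohen--Macaulay ring $R$ and is generated by $n$ elements.
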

\begin{proof}
Let $S := R[s_1,\dots,s_n,t]$ and let 
$J := \langle f_i-ts_i\, :\, 1\leq i\leq
n\rangle\subseteq S$. We have
\[
 \dim (J+\langle t\rangle) 
 = \dim S/(J+\langle t\rangle) 
 = \dim R[s_1,\dots,s_n]/I_{\mathbb P} 
 = \dim R.
\]
On the other hand, for any generator
$x_i\in R$ we have 
\[
 \dim (J+\langle t,x_i\rangle) 
 = \dim S/(J+\langle t,x_i\rangle) 
 = \dim R[s_1,\dots,s_n]/(I_{\mathbb P}
 +\langle x_i\rangle)
 < \dim R,
\]
where the last inequality is due to the fact
that the lattice ideal $I_{\mathbb P}$ is a graded-prime ideal. 
Then, by~\cite{hkl}*{Prop.~5.1}, the Cox ring
of ${\rm Bl}_e\mathbb P$ is isomorphic
to $R[s_1,\dots,s_n,t]/J^{\rm sat}$, where
$J^{\rm sat}$ is the saturation 
$J\colon\langle t\rangle^\infty$.
To conclude we need to show that 
$J = J^{\rm sat}$. Let us suppose by
contradiction that there exists a 
$g\in J^{\rm sat}
\setminus J$. If we denote by $\tilde g$
the evaluation of $g$ at $t=0$, we have
\[
 \dim S/(J^{\rm sat}+\langle t\rangle) 
 \leq
 \dim S/(J+\langle g,t\rangle) 
 =
 \dim
 R[s_1,\dots,s_n]/(I_{\mathbb P}+\langle \tilde g\rangle)
 = \dim R - 1.
\]
Since
$\dim R[s_1,\dots,s_n,t]/J^{\rm sat}
= \dim R[s_1,\dots,s_n,t^{\pm 1}]/J^{\rm sat}$
$= \dim R[t^{\pm 1}] = \dim R + 1$, we get a
contradiction.

\end{proof}

\subsection{Toric surfaces}
In what follows ``lattice ideal'' will
mean lattice ideal of a projective toric surface.
Let us recall the following result (see for instance~\cite{Kr}*{Theorem~1}
and~\cite{PS}*{Proposition~4.1}) about lattice ideals in $3$ variables.
\begin{lemma}
 \label{lem:A}
  Any lattice ideal $I_{\mathbb P}\subseteq
  k[x_1,x_2,x_3]$ is Cohen-Macaulay. Moreover 
  $I_{\mathbb P}$ is not a
  complete intersection if and only if 
  (modulo reordering
  the indexes) it is generated by the $2\times 2$
  minors of a matrix
  \begin{equation}
  \label{eq:A}
  A =
  \begin{pmatrix}
  x_1^{\alpha_1} & x_2^{\alpha_2} &
  x_3^{\alpha_3}\\
  x_3^{\beta_3} & x_1^{\beta_1} & x_2^{\beta_2}
  \end{pmatrix},
  \end{equation}
  with $\alpha_i,\beta_i > 0$ for any 
  $i=1,2,3$.
\end{lemma}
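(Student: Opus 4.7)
The plan is to invoke the cited classification results of Kr\"uger~\cite{Kr}*{Thm.~1} and of Peeva-Sturmfels~\cite{PS}*{Prop.~4.1}, and to sketch below the main ingredients. Under the hypothesis $I_{\mathbb P} \subseteq k[x_1,x_2,x_3]$, the lattice $L_\Sigma$ has rank $2$, so $I_{\mathbb P}$ is a height-$2$ homogeneous ideal and $\dim k[x_1,x_2,x_3]/I_{\mathbb P} = 1$.

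For the Cohen-Macaulay assertion, the strategy is to establish that the projective dimension of $k[x_1,x_2,x_3]/I_{\mathbb P}$ equals $2$, by constructing an explicit length-$2$ free resolution from the combinatorics of $L_\Sigma$ (for example a hull or Taylor-type resolution adapted to lattice ideals). The Auslander-Buchsbaum formula then yields $\mathrm{depth}(k[x_1,x_2,x_3]/I_{\mathbb P}) = 1 = \dim k[x_1,x_2,x_3]/I_{\mathbb P}$, which is the Cohen-Macaulay condition. Equivalently, the Hilbert-Burch theorem can be applied, producing both the matrix presentation and Cohen-Macaulayness at once.

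For the structural dichotomy, when $I_{\mathbb P}$ is a complete intersection there is nothing further to prove. Otherwise, Hilbert-Burch presents $I_{\mathbb P}$ as the ideal of $2\times 2$ minors of a $2\times 3$ matrix $A$, and because $I_{\mathbb P}$ is binomial the entries of $A$ may be chosen to be monomials. The main obstacle I would expect is the combinatorial step of showing that, after reordering variables and rows, these monomial entries must follow exactly the cyclic shape displayed, with each $\alpha_i,\beta_i$ strictly positive: if any entry involved more than one variable, or if some exponent vanished, the ideal would either collapse to a complete intersection or fail minimality of its binomial generators. This combinatorial verification, via an explicit analysis of rank-$2$ sublattices of $\mathbb Z^3$ and the pure binomials they generate, is the heart of both cited references.
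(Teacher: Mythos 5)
The paper gives no proof of this lemma at all: it is stated as a recalled result with pointers to \cite{Kr}*{Theorem~1} and \cite{PS}*{Proposition~4.1}, exactly the references you invoke. Your sketch of the ingredients behind those citations (height-$2$ lattice ideal in three variables, Cohen--Macaulayness via a length-$2$ resolution, Hilbert--Burch, and the combinatorial reduction to the cyclic monomial matrix) is consistent with what the cited sources establish, so your proposal takes essentially the same approach as the paper.
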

When there are only $3$
variables, or equivalently the fan $\Sigma$
has $3$ rays, we have the following 
characterization of
toric surfaces $\mathbb P
= \mathbb P_{\Sigma}$ such that the Cox ring
of ${\rm Bl}_e\mathbb P$ is generated in
multiplicity $1$
(see also~\cite{hkl}*{Thm.~1.1}).
\begin{proposition}
    \label{cor:ci}
      Let $L$ be a lattice in $\mathbb{Z}^3$ associated to the rays of a complete
      fan $\Sigma$, and let 
      $I_{\mathbb P}\subseteq k[x_1,x_2,x_3]$ be the corresponding lattice ideal. The following are equivalent.
      \begin{itemize}
      \item[(i)] The Cox ring of ${\rm Bl}_e\mathbb P$ is generated in multiplicity $1$.
      \item[(ii)] $I_{\mathbb P} \not \subseteq \langle x_1,x_2,x_3\rangle^2$.
      \item[(iii)] $I_{\mathbb P}$ is a complete intersection.
      \end{itemize}
\end{proposition}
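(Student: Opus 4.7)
The plan is to argue cyclically $(iii)\Rightarrow(i)\Rightarrow(ii)\Rightarrow(iii)$.

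The implication $(iii)\Rightarrow(i)$ is immediate from Proposition~\ref{lem:ci} applied with $n=2$: if $I_{\mathbb P}=\langle f_1,f_2\rangle$ is a complete intersection then $\mathcal R({\rm Bl}_e\mathbb P)\simeq R[s_1,s_2,t]/\langle f_i-ts_i\rangle$, which is generated as a $\mathbb K$-algebra by $x_1,x_2,x_3,s_1,s_2,t$, all of multiplicity at most $1$ in the sense of Definition~\ref{def:mul}.

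For $(ii)\Rightarrow(iii)$ I argue by contraposition via Lemma~\ref{lem:A}. If $I_{\mathbb P}$ is not a complete intersection, Lemma~\ref{lem:A} produces a matrix
\[
 A=\begin{pmatrix} x_1^{\alpha_1} & x_2^{\alpha_2} & x_3^{\alpha_3}\\ x_3^{\beta_3} & x_1^{\beta_1} & x_2^{\beta_2}\end{pmatrix}
\]
with all $\alpha_i,\beta_i>0$ whose $2\times 2$ minors minimally generate $I_{\mathbb P}$. Each such minor is a binomial both of whose monomials is a product of two entries of $A$, hence of $x$-degree at least two. Therefore every minimal generator of $I_{\mathbb P}$ lies in $\mathfrak m^2:=\langle x_1,x_2,x_3\rangle^2$, so $I_{\mathbb P}\subseteq \mathfrak m^2$, contradicting (ii).

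The hard direction is $(i)\Rightarrow(ii)$, which I argue contrapositively. Assume $I_{\mathbb P}\subseteq \mathfrak m^2$; by the previous step $I_{\mathbb P}$ is non-CI and is generated by the three minors $f_1,f_2,f_3$ of $A$. The key reformulation is that the bidegree-$(D,-k)$ piece of $\mathcal R({\rm Bl}_e\mathbb P)$ equals $R_D\cap I_{\mathbb P}^{(k)}$ (sections of $\pi^*\mathcal O(D)-kE$, i.e., homogeneous forms vanishing at $e$ to order at least $k$), whereas the subalgebra generated in multiplicity $1$ captures at this bidegree only $R_D\cap I_{\mathbb P}^{k}$; consequently (i) is equivalent to $I_{\mathbb P}^{(k)}=I_{\mathbb P}^{k}$ for every $k\geq 1$. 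The main obstacle is thus to exhibit an element of $I_{\mathbb P}^{(2)}\setminus I_{\mathbb P}^{2}$, which I would construct via a determinantal identity combining the two Hilbert--Burch syzygies
\[
 x_1^{\alpha_1}f_3-x_2^{\alpha_2}f_2+x_3^{\alpha_3}f_1=0,\qquad x_3^{\beta_3}f_3-x_1^{\beta_1}f_2+x_2^{\beta_2}f_1=0;
\]
alternatively, this step can be replaced by a direct invocation of \cite{hkl}*{Thm.~1.1}, or by the classical fact that for a Cohen--Macaulay height-$2$ ideal the equality $I^{(k)}=I^{k}$ for all $k$ forces $I$ to be a complete intersection.
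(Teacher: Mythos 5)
Your cyclic scheme $(iii)\Rightarrow(i)\Rightarrow(ii)\Rightarrow(iii)$ is logically sound, and two of the three arrows coincide with the paper's argument: $(iii)\Rightarrow(i)$ is exactly the appeal to Proposition~\ref{lem:ci}, and your contrapositive proof of $(ii)\Rightarrow(iii)$ via the monomial matrix of Lemma~\ref{lem:A} is the same observation the paper makes. The problem is the remaining arrow. What you call the ``hard direction'' is indeed the hard direction, and you have not proved it: you reformulate $(i)$ as the equality $I_{\mathbb P}^{(k)}=I_{\mathbb P}^{k}$ for all $k$ (asserted without justification, though it does follow from the description of $\mathcal R({\rm Bl}_e\mathbb P)$ as an extended symbolic Rees algebra via \cite{hkl}*{Prop.~5.2}), and then, for the crucial step --- exhibiting an element of $I_{\mathbb P}^{(2)}\setminus I_{\mathbb P}^{2}$ when $I_{\mathbb P}$ is the non-complete-intersection determinantal ideal --- you list three possible strategies without carrying out any of them. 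The determinantal identity you allude to is never written down; invoking \cite{hkl}*{Thm.~1.1} is close to assuming the statement being proved, since the proposition is presented as a reworking of that very theorem; and the Cowsik--Nori-type result you call a ``classical fact'' is a genuine theorem, but it is stated for primes with one-dimensional quotient in a local ring, so applying it here requires checking that the orbit closure $V(I_{\mathbb P})\subseteq\mathbb A^3$ and the relevant maximal ideals satisfy its hypotheses (and that the conclusion globalizes from the local to the graded setting), which you do not do. As it stands, your $(i)\Rightarrow(ii)$ is a plan, not an argument.

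For contrast, the paper closes the cycle with a short geometric argument for $(i)\Rightarrow(iii)$: generation in multiplicity one yields, via Corollary~\ref{cor:4C}, two one-parameter subgroups $C_v$, $C_w$ whose strict transforms satisfy $\tilde C_v\cdot\tilde C_w=0$, hence $C_v\cdot C_w=1$ and $\det(v,w)=1$; normalizing $v=(1,0)$, $w=(0,1)$ and using that $C_v\cap C_w$ avoids the invariant points forces one of the two directions to lie on a ray of $\Sigma$ and the other two rays to sit in the second and third quadrants, whence $I_{\mathbb P}$ is a complete intersection by \cite{PS}*{Lemma~3.1}. If you wish to keep your algebraic route, you must either produce the element of $I_{\mathbb P}^{(2)}\setminus I_{\mathbb P}^{2}$ explicitly from the Hilbert--Burch matrix, or verify the hypotheses of the Cowsik--Nori theorem in this setting; otherwise the gap remains.
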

\begin{proof}
The implication $(ii) \Rightarrow (iii)$ follows from Lemma~\ref{lem:A}.
We prove $(iii) \Rightarrow (ii)$. 
Let us suppose
that $I_{\mathbb P} \subseteq
\langle x_1,x_2,x_3\rangle^2$. 
Since $I_{\mathbb P}$ is a lattice ideal, 
it does not contain linear components of codimension $2$, and hence, up to reordering the variables, $I_{\mathbb P}$ is generated by
$f_1 = x_1^{a_1} - x_2^{a_2}x_3^{a_3}$
and $f_2 = x_2^{b_2} - x_3^{b_3}$, where
$a_1,b_2,b_3 > 1$. This implies that 
the lattice $L$ is generated by the rows 
of the following matrix
\[
\begin{pmatrix}
        a_1 & -a_2 & -a_3\\
        0 & b_2 & -b_3
\end{pmatrix},
\]
whose columns are the rays of the fan
$\Sigma$. Since the first
column is not primitive we get a contradiction.

The implication $(iii) \Rightarrow (i)$
is given by Lemma~\ref{lem:ci}, so that we conclude 
by showing that $(i) \Rightarrow (iii)$. If the 
Cox ring of ${\rm Bl}_e\mathbb P$ is generated in multiplicity $1$, by 
Corollary~\ref{cor:4C} there
there exist two one-parameter subgroups 
$C_v$ and $C_w$ such that $\tilde C_v\cdot \tilde C_w = 0$.
This implies that $C_v\cdot C_w = 1$, so that
$C_v$ and $C_w$ meet only at $e$, transversely.
Therefore $\det(v,w) = 1$ and we can assume 
without loss of generality that 
$v = (1,0)$ and $w  = (0,1)$. Let us denote 
by $\rho_1, \rho_2$ and $\rho_3$ the primitive
vectors generating the $1$-dimensional
rays of $\Sigma$. Since no
invariant point of $\mathbb P$ is contained in
the intersection $C_v\cap C_w$, one of the
two directions, say $v$, must lie on a 
$1$-dimensional ray of $\Sigma$, so that 
we can assume that $\rho_1 = v$.
The remaining two rays of $\Sigma$ must lie on the
second and third quadrant respectively (one of them
can lie on the vertical axis). 
By~\cite{PS}*{Lemma~3.1} we conclude that the lattice ideal 
$I_{\mathbb P}$ is a complete intersection.
\end{proof}
When the number of variables is bigger, we
have the following weaker result.
\begin{theorem}
\label{thm:gen}
If the Cox ring of ${\rm Bl}_e\mathbb P$
is generated in multiplicity $1$, then 
for any three indexes $i,j,k\in\{1,\dots,n\}$, $I_{\mathbb{P}}\not\subseteq \langle x_i,x_j,x_k
\rangle^2$.
\end{theorem}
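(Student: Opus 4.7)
The plan is to reduce to the three-variable case of Proposition~\ref{cor:ci} via a projection argument on the lattice $L_\Sigma$. The key combinatorial observation is that for any three indices $i,j,k$ the coordinate projection $\pi_{ijk}\colon\mathbb{Z}^r\to\mathbb{Z}^{\{i,j,k\}}$ restricts to an isomorphism $L_\Sigma\xrightarrow{\sim}L_{\Sigma'}$, where $L_{\Sigma'}$ is the lattice associated to the three rays $u_i,u_j,u_k$; under this isomorphism the assumption $I_\mathbb{P}\subseteq\langle x_i,x_j,x_k\rangle^2$ translates directly to $I_{\mathbb{P}'}\subseteq\langle x_i,x_j,x_k\rangle^2$.

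I would then split into two cases depending on whether $u_i,u_j,u_k$ positively span $\mathbb{Q}^2$. In the degenerate case the three rays lie in a closed half-plane $\{v:\langle n,v\rangle\geq 0\}$ for some nonzero $n\in M$, so the lattice element $P_\Sigma(n)\in L_\Sigma$ has nonnegative entries in positions $i,j,k$; the corresponding binomial in $I_\mathbb{P}$ has its negative monomial of degree zero in $\{x_i,x_j,x_k\}$, hence does not lie in $\langle x_i,x_j,x_k\rangle^2$, and the conclusion holds without even invoking the multiplicity-one hypothesis. In the non-degenerate case the rays $u_i,u_j,u_k$ form a complete fan $\Sigma'$ defining a toric surface $\mathbb{P}'$ of Picard rank one with Cox ring $k[x_i,x_j,x_k]$, and the refinement $\Sigma\to\Sigma'$ induces a birational toric morphism $\pi\colon\mathbb{P}\to\mathbb{P}'$ which is an isomorphism over the big torus. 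Thus $e':=\pi(e)$ is a general point of $\mathbb{P}'$ and $\pi$ lifts to a birational morphism ${\rm Bl}_e\mathbb{P}\to{\rm Bl}_{e'}\mathbb{P}'$, yielding a natural inclusion $\mathcal R({\rm Bl}_{e'}\mathbb{P}')\hookrightarrow\mathcal R({\rm Bl}_e\mathbb{P})$ of Cox rings compatible with their embeddings into the corresponding Laurent extensions.

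The remaining step is to show that multiplicity-one generation descends along this inclusion: if $\mathcal R({\rm Bl}_e\mathbb{P})$ admits a generating set of elements $ft^{-i}$ with $f\in R$ and $0\leq i\leq 1$, then the sub-algebra $\mathcal R({\rm Bl}_{e'}\mathbb{P}')$ admits such a generating set as well. Once this descent is established, Proposition~\ref{cor:ci} applied to $\mathbb{P}'$ forces $I_{\mathbb{P}'}\not\subseteq\langle x_i,x_j,x_k\rangle^2$, contradicting the translated assumption.

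The main obstacle lies in justifying this descent of multiplicity-one generation, since a priori the restriction of a generating set of the ambient Cox ring to a sub-algebra need not yield a generating set. The argument requires a careful comparison of the symbolic powers $I_{\mathbb{P}'}^{(m)}\subseteq k[x_i,x_j,x_k]$ with $I_\mathbb{P}^{(m)}\subseteq R$, exploiting that the lattices $L_\Sigma$ and $L_{\Sigma'}$ are identified via the projection $\pi_{ijk}$ and that the two Cox rings sit inside a common Laurent extension in a compatible way.
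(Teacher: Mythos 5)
Your reduction to three variables, the degenerate/non-degenerate dichotomy, and the appeal to Proposition~\ref{cor:ci} all match the paper's strategy, and your handling of the degenerate case (a lattice vector with nonnegative entries in positions $i,j,k$ yields a binomial one of whose monomials lies outside $\langle x_i,x_j,x_k\rangle$, hence the binomial is not in the square of that monomial ideal) is correct; it is in fact the content of the paper's claim that the subfan $\Sigma'$ is complete. The gap is exactly where you locate it: the transfer of multiplicity-one generation from ${\rm Bl}_e\mathbb P$ to ${\rm Bl}_{e'}\mathbb P'$. You set this up as a \emph{descent to a subalgebra} along the pullback inclusion $\mathcal R({\rm Bl}_{e'}\mathbb P')\hookrightarrow\mathcal R({\rm Bl}_e\mathbb P)$, and, as you yourself observe, a generating set of an algebra says nothing about generation of a subalgebra; the proposed ``comparison of symbolic powers'' is not carried out, and it is not clear it can be made to work in this orientation.

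The step goes through if you orient the transfer the other way. The contraction of the rays $\rho_\ell$ with $\ell\notin\{i,j,k\}$ induces a surjective morphism $\pi\colon{\rm Bl}_e\mathbb P\to{\rm Bl}_{e'}\mathbb P'$, and on Cox rings (sitting inside $R[t,t^{-1}]$ and $k[x_i,x_j,x_k][t,t^{-1}]$ respectively) this is realized by the evaluation $x_\ell\mapsto 1$ for $\ell\notin\{i,j,k\}$ and $t\mapsto t$. This map sends $\mathcal R({\rm Bl}_e\mathbb P)$ \emph{onto} $\mathcal R({\rm Bl}_{e'}\mathbb P')$: the quasitorus orbit $p'^{-1}(e')$ embeds into $p^{-1}(e)$ by padding with $1$'s, so evaluation does not decrease the order of vanishing along the fibre over the blown-up point, and surjectivity in each degree follows because $\pi$ is an isomorphism near $e$ and every class of $\mathbb P'$ is a pullback. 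A ring surjection carries generators to generators and preserves the exponent of $t$, so multiplicity-one generation of $\mathcal R({\rm Bl}_e\mathbb P)$ forces the same for $\mathcal R({\rm Bl}_{e'}\mathbb P')$, contradicting Proposition~\ref{cor:ci}. This quotient (rather than subring) formulation is the missing ingredient; with it your argument closes and coincides with the paper's.
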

\begin{proof}
    Let us suppose by contradiction that
    there are three indexes, say $1,2,3$, 
    such that $I_{\mathbb P}\subseteq 
    \langle x_1,x_2,x_3\rangle^2$, 
    and let us denote 
    by $\rho_1,\dots,\rho_n$ the primitive
    vectors generating the $1$-dimensional
    rays of $\Sigma$.   
    Let us fix a basis $B
    =\{f_1,\dots,f_r\}$
    for the lattice ideal $I_\mathbb{P}$, and let
    us consider the ideal 
    \[
    I' := \langle
    f_i'\, : \, 1\leq i\leq r
    \rangle\subseteq\mathbb{C}[x_1,x_2,x_3],
    \]
    where we set $f_i' := 
    f(x_1,x_2,x_3,1,\dots,1)$.
    Let $\Sigma'$ be the fan whose rays are
    generated by $\rho_1,\rho_2$ and
    $\rho_3$, let $L'\subseteq \mathbb{Z}^3$ be the
    corresponding lattice and let us denote
    by $\mathbb P'$ the toric surface 
    $\mathbb P_{\Sigma'}$. We claim that 
    $\Sigma'$ is complete. 
    First of all observe that
    the map $({\mathbb C}^*)^3 \to 
    ({\mathbb C}^*)^2$, given by the matrix
    of $L'$, factorizes through the map
    $({\mathbb C}^*)^n \to 
    ({\mathbb C}^*)^2$, given by $L$. 
    This implies that $I_{\mathbb P'}$ equals
    the saturation of $I'$ with respect to
    $x_1x_2x_3$. Since $I'
    \subseteq \langle x_1,x_2,x_3 \rangle^2$,
    the monomials of each binomial in the 
    saturation of $I'$ is divisible by 
    either $x_1,x_2$ or $x_3$. This gives
    the claim.
    We can then apply Proposition~\ref{cor:ci}
    to $I_{\mathbb P}'$ and deduce that the Cox 
    ring of ${\rm Bl}_e\mathbb P'$ is not 
    generated in multiplicity $1$.
    We conclude observing that the contraction
    of the rays $\{\rho_i\, : \, i = 
    4,\dots,n\}$ induces a surjective morphism
    $\pi\, \colon\, {\rm Bl}_e\mathbb P \to 
    {\rm Bl}_e\mathbb P'$.
\end{proof}
Indeed, in all the examples we checked 
with the help of Magma~\cite{mag} we
have found that also the other implication
holds, so that we formulate the following.
\begin{conjecture}
\label{con:m1}
Given a be a projective toric surface 
$\mathbb P$ the following 
are equivalent:
\begin{enumerate}
\item the Cox ring of ${\rm Bl}_e\mathbb P$
is generated in multiplicity $1$;
\item for any three indexes $i,j,k$,
$I_{\mathbb P}\not\subseteq \langle x_i,x_j,x_k\rangle^2$.
\end{enumerate}
\end{conjecture}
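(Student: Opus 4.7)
The implication (i) $\Rightarrow$ (ii) is already Theorem~\ref{thm:gen}, so my plan focuses on the reverse implication (ii) $\Rightarrow$ (i). The strategy combines the explicit presentation of the Cox ring of the blow-up from \cite{hkl}*{Prop.~5.1} with the two cases already settled in the paper: the Cohen--Macaulay situation (Theorem~\ref{thm:CM}) and the three-variable case (Proposition~\ref{cor:ci}).

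Fix a minimal binomial basis $\{f_1,\dots,f_r\}$ of $I_{\mathbb P}\subseteq R:=\mathcal R(\mathbb P)$, set $S := R[s_1,\dots,s_r,t]$ and $J := \langle f_i - t s_i \, : \, 1\le i\le r\rangle$. By \cite{hkl}*{Prop.~5.1} one has $\mathcal R({\rm Bl}_e\mathbb P)\cong S/J^{\rm sat}$, where the saturation is taken with respect to $t$. Generation in multiplicity $1$ is then equivalent to the equality $J^{\rm sat}=J$: any $g\in J^{\rm sat}\setminus J$ of positive $s$-degree produces a generator of the Cox ring involving $t^{-i}$ with $i\ge 2$. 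Thus the task is to show that under hypothesis (ii) one has $J^{\rm sat}=J$.

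The approach I would take is to assume, for contradiction, that there exists $g\in J^{\rm sat}\setminus J$ of smallest total $s$-degree, and then extract from $g$ a combinatorial obstruction living inside the lattice ideal. Reducing modulo $t$ gives an expression $\bar g(x,s)$ which, after carefully tracking how it is assembled from the binomial generators $f_i$, would force the existence of some binomial of $I_{\mathbb P}$ contained in $\langle x_i,x_j,x_k\rangle^2$ for a triple of indices, contradicting (ii). In the complete-intersection case of Proposition~\ref{lem:ci} this bookkeeping is trivial (a pure dimension count); in the Cohen--Macaulay case of Theorem~\ref{thm:CM} it is controlled by the explicit (Hilbert--Burch type) resolution of $R/I_{\mathbb P}$ exhibited through the matrix $A$ of Lemma~\ref{lem:A} after reduction to three variables. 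The natural attempt is to upgrade this to the general setting by replacing the Koszul/Hilbert--Burch machinery with an analysis of the local cohomology module $H^0_{\langle t\rangle}(S/J)$.

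The hard part is precisely this last step. The toric contractions used in Theorem~\ref{thm:gen} only allow us to transport \emph{obstructions to (ii)} from a three-ray quotient $\mathbb P'$ back to $\mathbb P$; running the argument in the reverse direction would require transporting a multiplicity $\ge 2$ generator of $\mathcal R({\rm Bl}_e\mathbb P)$ down to $\mathcal R({\rm Bl}_e\mathbb P')$, but the specialisation $x_\ell\mapsto 1$ that underlies these contractions does not commute with the saturation producing $J^{\rm sat}$ once $I_{\mathbb P}$ fails to be Cohen--Macaulay. Producing either a substitute for this descent, or an intrinsic homological criterion replacing the Cohen--Macaulay hypothesis in Theorem~\ref{thm:CM}, is the missing ingredient that keeps the statement at the level of a conjecture.
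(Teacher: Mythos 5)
The statement you were given is labelled a \emph{conjecture} in the paper, and the paper contains no proof of it: only the implication (i)~$\Rightarrow$~(ii) is established (Theorem~\ref{thm:gen}), together with the special cases where $I_{\mathbb P}$ is a complete intersection (Proposition~\ref{lem:ci}), lives in three variables (Proposition~\ref{cor:ci}), or is Cohen--Macaulay (Theorem~\ref{thm:CM}); the general case is supported only by computational evidence. Your proposal is candid about not closing the gap, and your diagnosis of where the difficulty sits --- the specialisation $x_\ell\mapsto 1$ underlying the contraction argument of Theorem~\ref{thm:gen} does not commute with the saturation defining $J^{\rm sat}$, so the descent only transports obstructions to (ii), not multiplicity-two generators --- agrees with the paper's own assessment. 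So there is no disagreement about the status of the statement.

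There is, however, a concrete error in the one reduction you do commit to. You assert that generation in multiplicity $1$ is equivalent to the equality $J^{\rm sat}=J$. This is not the criterion the paper uses, and it is false: the quotient $S/J^{\rm sat}$ is the subalgebra of $R[t,t^{-1}]$ generated by $t$, the $x_i$ and the $f_it^{-1}$, i.e.\ the multiplicity-$\leq 1$ part, and the content of \cite{hkl}*{Prop.~5.1} is a \emph{dimension condition} on $J^{\rm sat}+\langle t, x_1\cdots x_n\rangle$ guaranteeing that this subalgebra is the whole Cox ring. The paper's own Theorem~\ref{thm:CM} refutes your equivalence: in the Cohen--Macaulay non-complete-intersection case the Hilbert--Burch syzygies produce elements $h_1,h_2$ with $th_j\in J$ but $h_j\notin J$, so $J^{\rm sat}\supsetneq J$, and yet the Cox ring is generated in multiplicity one (the proof there verifies $\dim(J^{\rm sat}+\langle t,\prod_i x_i\rangle)\leq n-1$, not $J=J^{\rm sat}$). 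Equality $J=J^{\rm sat}$ is only established, and only needed, in the complete-intersection case of Proposition~\ref{lem:ci}, where it yields the explicit presentation. Any attempt to attack the conjecture along your lines should therefore be organised around the dimension criterion for $J^{\rm sat}$ rather than around the saturation being trivial; with that correction, the honest conclusion of your proposal --- that an intrinsic replacement for the Cohen--Macaulay hypothesis is the missing ingredient --- stands.
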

We have already seen that the above conjecture holds if there are only $3$ variables
(Proposition~\ref{cor:ci}) and we are going 
to show that it also holds if the ideal $I_{\mathbb P}$
is Cohen-Macaulay, i.e. it has at most $3$ minimal
generators.
\begin{theorem}
\label{thm:CM}
Conjecture~\ref{con:m1} holds true if  
$I_{\mathbb P}$ is Cohen-Macaulay.
\end{theorem}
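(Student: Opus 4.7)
The implication (i) $\Rightarrow$ (ii) is already supplied by Theorem~\ref{thm:gen}, so my plan focuses on the converse. Assume $I_{\mathbb{P}}$ is Cohen--Macaulay and that (ii) holds. As $I_{\mathbb{P}}$ is a codimension-two Cohen--Macaulay ideal in the polynomial ring $R = k[x_1, \ldots, x_r]$ serving as the Cox ring of $\mathbb{P}$, it has at most three minimal generators, and the argument naturally splits in two. If $I_{\mathbb{P}}$ is a complete intersection, Proposition~\ref{lem:ci} applies verbatim and yields a multiplicity-one presentation of $\mathcal R({\rm Bl}_e\mathbb{P})$, with no further input needed.

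The interesting case is when $I_{\mathbb{P}} = \langle f_1, f_2, f_3\rangle$ has exactly three minimal generators. The Hilbert--Burch theorem then exhibits the $f_i$ as the $2 \times 2$ minors of a $3 \times 2$ matrix $A$, whose entries may be taken to be monomials in $R$ because $I_{\mathbb{P}}$ is a lattice ideal, and the two columns of $A$ encode syzygies $\sum_i A_{ij} f_i = 0$ for $j = 1, 2$. Following the template of Proposition~\ref{lem:ci}, I would introduce variables $s_1, s_2, s_3, t$ and set
\[
 J := \langle f_i - t s_i : 1 \leq i \leq 3 \rangle,
 \qquad
 K := J + \langle {\textstyle\sum_i} A_{ij} s_i : j = 1, 2 \rangle,
\]
aiming to prove the presentation $\mathcal R({\rm Bl}_e\mathbb{P}) \cong R[s_1, s_2, s_3, t]/K$, which displays a set of generators in multiplicity at most one. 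By~\cite{hkl}*{Prop.~5.1}, the Cox ring equals $R[s_1, s_2, s_3, t]/J^{\rm sat}$, the $t$-saturation of $J$, so the task reduces to the identity $K = J^{\rm sat}$. The inclusion $K \subseteq J^{\rm sat}$ is immediate: substituting $f_i = t s_i$ in the column syzygies yields $t \sum_i A_{ij} s_i \in J$.

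The main obstacle is the reverse inclusion $J^{\rm sat} \subseteq K$. My plan is to imitate the dimension argument of Proposition~\ref{lem:ci}: I would compute $\dim R[s_1, s_2, s_3, t]/(K + \langle t\rangle)$ and then show that $\dim R[s_1, s_2, s_3, t]/(K + \langle t, x_i\rangle)$ drops strictly for every Cox variable $x_i$, which would rule out embedded components of $V(K)$ supported in $V(t)$ and thereby force $K$ to be $t$-saturated. A subtle ingredient in the first computation is that the two column syzygies fail to form a regular sequence modulo $I_{\mathbb{P}}$ --- precisely because the minors of $A$ vanish modulo $I_{\mathbb{P}}$, forcing the columns of $A$ to be proportional there --- so they only cut the dimension by one instead of two, which produces exactly the expected value. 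The strict drop after adjoining $x_i$ is where condition (ii) must enter: the non-containment $I_{\mathbb{P}} \not\subseteq \langle x_i, x_j, x_k\rangle^2$ is designed to prevent the monomial entries of $A$ from being simultaneously absorbed into the ideal of three Cox variables, which would otherwise create an unwanted excess dimension in the locus where $A$ degenerates. Carrying out this combinatorial bookkeeping, with careful attention to the monomial shape of $A$, is where I expect the real effort to lie.
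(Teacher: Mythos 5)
Your overall strategy---split into the complete intersection case and the three-generator Hilbert--Burch case, introduce the syzygy forms $h_j=\sum_i a_{j,i}s_i$, and bring in hypothesis (ii) through a dimension count after adjoining $t$ and a variable $x_i$---is the same as the paper's. But there are two genuine problems with the way you propose to close the argument. First, you invoke \cite{hkl}*{Prop.~5.1} as if it gave the identity $\mathcal R({\rm Bl}_e\mathbb P)\cong R[s_1,s_2,s_3,t]/J^{\rm sat}$ unconditionally; in fact $R[s_1,s_2,s_3,t]/J^{\rm sat}$ is only the subalgebra generated in multiplicity at most one, and the content of that proposition (as it is used in Proposition~\ref{lem:ci}) is that this subalgebra is everything \emph{provided} suitable dimension conditions on $J^{\rm sat}+\langle t\rangle$ and $J^{\rm sat}+\langle t,x_i\rangle$ hold. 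So establishing $K=J^{\rm sat}$ would not by itself prove generation in multiplicity one, and conversely the full presentation is not needed: the paper only uses the easy containment $\langle f_1,f_2,f_3,h_1,h_2\rangle\subseteq J^{\rm sat}+\langle t\rangle$ (your inclusion $K\subseteq J^{\rm sat}$) to bound $\dim(J^{\rm sat}+\langle t,x_i\rangle)$ from above, and then applies the criterion. Second, your route to $J^{\rm sat}\subseteq K$ is itself gapped: showing by a dimension count that no component of $V(K)$ lies in $V(t)$ only controls the minimal primes of $K$, whereas $t$-saturation of $K$ requires $t$ to avoid every associated prime; without an unmixedness or Cohen--Macaulayness statement for the quotient by $K$ (equivalently, without knowing the presented algebra has no $t$-torsion, which is a linear-type condition on $I_{\mathbb P}$ that you do not verify) the conclusion does not follow.

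Finally, the place where hypothesis (ii) actually does the work is precisely the combinatorial analysis you defer. In the paper one fixes $x_i$, notes that since the $f_j$ are binomials the vanishing of $x_i$ forces a second variable $x_j$ to vanish, and then splits according to whether a third variable $x_k$ is forced to vanish as well: if so, the hypothesis $I_{\mathbb P}\not\subseteq\langle x_i,x_j,x_k\rangle^2$ guarantees that some entry of $A$ survives, hence $h_1$ or $h_2$ does not vanish identically; if not, some $f_\ell$ and one entry in each row of $A$ survive. Either way one gets $\dim(J^{\rm sat}+\langle t,x_i\rangle)\leq n-1$ for every $i$, and the multiplicity-one criterion applies. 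Until this case analysis is carried out and the logical role of \cite{hkl}*{Prop.~5.1} is corrected, the proposal does not yet constitute a proof.
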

\begin{proof}
By Theorem~\ref{thm:gen} we only need to prove the implication 
$(ii) \Rightarrow (i)$ of Conjecture~\ref{con:m1}.
If $I_{\mathbb P}$ has $2$ generators, 
then it is a complete 
intersection, so that the Cox ring of ${\rm Bl}_e
\mathbb P$ is generated in multiplicity $1$ by
Proposition~\ref{lem:ci}. Let us suppose
that there exist three
indexes, say $1,2,3$, such that $I_{\mathbb P}
\subseteq \langle x_1,x_2,x_3\rangle^2$.
Reasoning as in the proof of 
Proposition~\ref{cor:ci} we would have
that the generator of a ray of $\Sigma$ is
not primitive, a contradiction.

Let us suppose now that 
$I_{\mathbb P}$ has $3$ minimal 
generators,
so that, by Hilbert--Burch Theorem~\cite{E}*{Thm.~3.2}, 
$I_{\mathbb P} = \langle f_1,f_2,f_3\rangle$, where
$f_i$ are the maximal minors of a $2\times 3$ 
matrix $A = (a_{i,j})$, whose entries are monomials in $x_1,\dots,x_n$.
In this case we have that 
$J^{{\rm sat}} + \langle t\rangle \supseteq 
\langle f_1,f_2,f_3,h_1,h_2\rangle$,
where $h_1 = a_{1,1} s_1 + a_{1,2}s_2 + a_{1,3}s_3$ and
$h_2 = a_{2,1} s_1 + a_{2,2}s_2 + a_{2,3}s_3$.
Let us fix a variable $x_i$, with $1\leq i\leq n$.
Since $x_i$ appears in at least one of $f_1,f_2,f_3$, 
we have that the vanishing of $x_i$ implies the vanishing of at least another 
variable $x_j$. Let us distinguish two cases.
\begin{itemize}
\item[i)] The vanishing of $x_i$ and $x_j$ forces a third variable $x_k$ to
be zero. By the assumption $I_{\mathbb P}
\not \subseteq \langle x_i,x_j,x_k\rangle^2$ we deduce that there exists an 
entry of $A$ that does not vanish
for $x_i=x_j=x_k=0$. In particular, at least one of $h_1,h_2$ does not vanish
identically, so that $\dim(J^{{\rm sat}} + \langle t,x_i\rangle) \leq n-1$. 

\item[ii)] The vanishing $x_i = x_j = 0$ does not imply the vanishing of a third variable.
In this case there is at least one of the binomials, say $f_1$, whose monomials do not 
contain $x_i$ and $x_j$, so that it survives when we set $x_i=x_j=0$. Moreover,
in each row of $A$ there is at least one entry that does not vanish, which implies that
also $h_1$ and $h_2$ do not vanish identically.
This implies again that $\dim(J^{{\rm sat}} + \langle t,x_i\rangle) \leq n-1$
\end{itemize}
Since the above reasoning holds for any index $1\leq i\leq n$, we conclude that 
$\dim(J^{{\rm sat}} + \langle t,\Pi_1^nx_i\rangle) \leq n-1$, so that the Cox ring is generated in 
multiplicity $1$.

\end{proof}


\section{Minimal toric surfaces}
\label{sec:min}
Let us suppose that $\mathbb P$ is a 
minimal toric surface. This is equivalent 
to say that the fan $\Sigma$ has either
$3$ rays, or $4$ rays generated by 
primitive vectors
$\pm u,\, \pm v$. 
The former case has 
been studied in Proposition~\ref{cor:ci},
so that from now on we focus on the latter case.
Modulo ${\rm GL}(2,{\mathbb Z})$ we can always suppose that 
the $1$-dimensional cones of the normal fan $\Sigma$ are generated by the columns 
of the matrix
\begin{equation}
    \label{eq:mat}
\begin{pmatrix}
1 & p & -1 & -p\\
0 & q & 0 & -q
\end{pmatrix}
\end{equation}
where $0 < p \leq q/2$ and $\gcd(p,q) = 1$.
The intersection matrix with respect to the basis $(H_1,H_2,E)$ of the Picard
group of the blow-up $\rm{Bl}_e\mathbb P$ 
is
\begin{equation}
\label{eq:M}
M =
 \begin{pmatrix}
 0 & \frac1{q} & 0\\
 \frac1{q} & 0 & 0\\
 0 & 0 & -1
 \end{pmatrix}.
\end{equation}
Let us fix a direction $v=(a,b)\in N$, with 
$b\geq 0$. The class of the
strict transform $\tilde C_v$ of the corresponding
$1$-parameter subgroup is 
\begin{equation}
\label{eq:class}
 bH_1 + |aq-bp| H_2 -E.
 \end{equation}
In what follows we are 
going to consider only a suitable 
set of $1$-parameter subgroups.
In order to define them we first 
need the following construction.
\begin{construction}
\label{con:conv}
Let $[c_1,c_2,\dots, c_n]$ be the Euclidean 
continued fraction of $p/q$ and 
let $a_i/b_i$, for $1\leq i\leq n$ be the convergents of $p/q$.  
It is possible to describe
recursively $a_i$ and $b_i$ by means of the following (see for instance~\cite{Pe})
\begin{equation}
\label{eq:ab}
\begin{array}{ll}
\left\{
\begin{array}{rcl}
 a_{-1} & = & 0\\ 
 a_0 & = & 1\\
 a_{i} & = & c_{i}a_{i-1}+a_{i-2}
 \end{array}
 \right.
&
 \left\{
 \begin{array}{rcll}
 b_{-1} & = & 1\\ 
 b_0 & = & 0\\
 b_{i} & = & c_{i}b_{i-1}+b_{i-2}, & 1\leq i\leq n.
\end{array}
\right.
\end{array}
\end{equation}
If we set $v_i := (a_i,b_i)\in N$, 
for $0\leq i\leq n$, we have that
$v_i$ lies in the cone $\langle (1,0),(p,q)\rangle$ if $i$ is even,
while it lies in $\langle (p,q),(-1,0)\rangle$ if $i$ is odd. Let us denote simply by $C_i$ 
the class of $\tilde C_{v_i}$ in 
${\rm Bl}_e\mathbb P$. From~\eqref{eq:class}
we have that
\[
 C_i = b_iH_1 + \beta_iH_2 - E,
 \quad
 \text{where}
 \quad
 \beta_i := (-1)^i(a_iq-b_ip).
\]

\end{construction}

\begin{lemma}
\label{lem:rec}
With the notation above, the following hold:
\begin{enumerate}
\item $\beta_0 = q$,\ $\beta_1 = p$ and $
\beta_{i} = \beta_{i-2} - c_{i}\beta_{i-1}$ for $2\leq i\leq n$.
\item $\beta_ib_{i+1} + b_i\beta_{i+1} = q$, for any $0\leq i \leq n-1$.
\item $\beta_{i-1}b_{i+1} - \beta_{i+1}b_{i-1} = c_{i+1}q$, for any $1\leq i \leq n-1$.
\item $\beta_{i}b_{i} + \beta_{i+1}b_{i+1} < q$, for any $0\leq i \leq n-1$.
\end{enumerate}
\end{lemma}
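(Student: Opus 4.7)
The plan is to derive all four identities directly from the definition $\beta_i = (-1)^i(a_i q - b_i p)$ together with the recursions~\eqref{eq:ab} for $a_i, b_i$. Part (i) reduces to a direct check: $\beta_0 = q$ is immediate from $(a_0,b_0)=(1,0)$, and $\beta_1 = p$ from $(a_1,b_1)=(0,1)$. For the three-term recursion, substituting the recursions for $a_i$ and $b_i$ inside the definition yields in one line
\[
\beta_{i-2} - c_i\beta_{i-1} = (-1)^i\bigl[(a_{i-2}+c_i a_{i-1})q - (b_{i-2}+c_i b_{i-1})p\bigr] = \beta_i.
\]

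Parts (ii) and (iii) are then formal consequences. For (ii), expanding the left-hand side collapses to
\[
\beta_i b_{i+1} + b_i\beta_{i+1} = (-1)^i(a_i b_{i+1} - a_{i+1} b_i)\,q,
\]
and the classical convergent-determinant identity $a_i b_{i+1} - a_{i+1} b_i = (-1)^i$ (proved by induction from $a_{-1}b_0 - a_0 b_{-1} = -1$ using~\eqref{eq:ab}) gives the claim. For (iii), I would substitute $b_{i+1} = c_{i+1}b_i + b_{i-1}$ and $\beta_{i+1} = \beta_{i-1} - c_{i+1}\beta_i$ (from (i)) into the left-hand side; the $\beta_{i-1}b_{i-1}$ terms cancel, leaving $c_{i+1}(\beta_{i-1}b_i + \beta_i b_{i-1}) = c_{i+1}q$ by (ii) applied at index $i-1$.

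For (iv), combining the definition of the left-hand side with (ii) yields the identity
\[
\beta_i b_i + \beta_{i+1}b_{i+1} - q = (b_{i+1}-b_i)(\beta_{i+1}-\beta_i),
\]
so it suffices to show that both factors on the right are nonzero with opposite signs. That $(b_i)$ is strictly increasing on $0\leq i\leq n$ follows from the recursion together with the hypothesis $p \leq q/2$, which forces $c_2 \geq 2$ and hence $b_{i+1} \geq b_i + b_{i-1} > b_i$ throughout the relevant range. That $(\beta_i)$ is strictly decreasing comes from recognizing the recursion in (i) with initial values $(q,p)$ as the Euclidean algorithm on the pair $(q,p)$: this identifies the $\beta_i$'s as the successive remainders, so $\beta_0 > \beta_1 > \cdots > \beta_n = 0$ strictly.

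The main obstacle is minor: parts (i)--(iii) are purely formal algebraic identities on the convergent recursions, and the only real content is in (iv), where one needs strict monotonicity of both sequences. The key observation unlocking this is that, under the initial values established in (i), the recursion on $(\beta_i)$ coincides with the Euclidean algorithm on $(q,p)$, after which strict decrease is automatic and the product $(b_{i+1}-b_i)(\beta_{i+1}-\beta_i)$ is unambiguously negative.
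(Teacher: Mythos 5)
Your proposal is correct and follows essentially the same route as the paper: the identities (i)--(iii) by direct induction on the convergent recursions, and (iv) via the factorization $q-\beta_ib_i-\beta_{i+1}b_{i+1}=(\beta_{i+1}-\beta_i)(b_i-b_{i+1})$ combined with strict monotonicity of $(b_i)$ and $(\beta_i)$, which the paper likewise deduces from $c_1=0$, $c_2>1$ and the recursion in (i). Your explicit identification of the $\beta_i$ with the Euclidean remainders of $(q,p)$ is a clean way to justify the strict decrease that the paper leaves implicit.
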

\begin{proof}
We only prove (iv), since (i), (ii) and (iii)
easily follow by induction. 
By (ii) and (i) we can write
\[
 q - \beta_ib_i - \beta_{i+1}b_{i+1} = \beta_ib_{i+1} + b_i\beta_{i+1} - 
 \beta_ib_i - \beta_{i+1}b_{i+1}
 = (\beta_{i+1}-\beta_{i})(b_{i}-b_{i+1}).
\]
Since we are supposing $2p < q$ 
we have that $c_1 = 0$ and $c_2 > 1$,
so that $\{b_0,\dots,b_n\}$ is strictly
increasing. In a similar way, by (i) one can 
deduce that $\{\beta_0,\dots,\beta_n\}$ 
is strictly decreasing.
The statement follows.
\end{proof}
\begin{remark}
\label{rem:ort}
Using the intersection matrix~\eqref{eq:M} 
and Lemma~\ref{lem:rec}(ii) we have that for any $0\leq i \leq n-1$,
\[
C_i\cdot C_{i+1} = \frac{\beta_{i}b_{i+1}+\beta_{i+1}b_{i}}q - 1 = 0.
\]
\end{remark}

\subsection{Effective cones}
We are now going to prove that the 
effective cone 
${\rm Eff}({\rm Bl}_e\mathbb P)$
is generated in multiplicity $1$.
In order to do this, we are going to use
Proposition~\ref{lem:eff} with the tuple 
consisting
of $E$ and all the classes $C_i$ having
negative self intersection. 
\begin{theorem}
 \label{thm:eff}
With the notation above,
  \[
 {\rm Eff}({\rm Bl}_e\mathbb P) = {\rm cone}(\{E\} \cup \{C_i \mid 0\leq i \leq n
\ {\rm and \ } 2\beta_ib_i < q\}),
 \]
 in particular it is generated in multiplicity $1$.
\end{theorem}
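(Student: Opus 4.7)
My plan is to apply Proposition~\ref{lem:eff} to the cyclically ordered tuple
\[
\mathcal{S}=(E,C_{i_0},C_{i_1},\dots,C_{i_k}),
\]
where $0=i_0<i_1<\dots<i_k=n$ enumerate the indices $i$ with $2\beta_ib_i<q$. Since $b_0=\beta_n=0$, one checks $C_0^2=C_n^2=-1$, so the extremes $0$ and $n$ indeed belong to the list and $E,C_0,C_n$ are linearly independent, giving $\dim\mathrm{Cone}(\mathcal{S})=3$. The multiplicity-one statement is immediate once the equality of cones is established, because $E$ and every $\tilde C_{v_i}$ represent multiplicity-one divisor classes on $\mathrm{Bl}_e\mathbb{P}$.

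The bulk of the proof is to verify negative semi-definiteness of the intersection form on each consecutive pair of $\mathcal{S}$. For the two boundary pairs involving $E$, the intersection matrix computes to $\bigl(\begin{smallmatrix}-1&1\\1&-1\end{smallmatrix}\bigr)$, which has zero determinant and negative trace, hence is NSD. For a consecutive pair $(C_{i_j},C_{i_{j+1}})$ with $i_{j+1}=i_j+1$, Remark~\ref{rem:ort} gives $C_{i_j}\cdot C_{i_{j+1}}=0$ and the restricted matrix is diagonal with strictly negative entries.

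The only remaining case is the \emph{skipped pair} $(C_{i_j},C_{i_{j+1}})$ with $i_{j+1}=i_j+2$, arising when $m:=i_j+1$ satisfies $C_m^2\geq 0$. By Lemma~\ref{lem:rec}(iv), two consecutive indices cannot simultaneously satisfy $2\beta_ib_i\geq q$, so at most one index is skipped at a time. The crucial identity in this case is
\[
q^2\bigl(C_{i_j}^2\,C_{i_{j+1}}^2-(C_{i_j}\cdot C_{i_{j+1}})^2\bigr)=c_{m+1}^{\,2}\,q^2\,C_m^2,
\]
which I would prove by direct expansion, using the three-term recursions $\beta_{i_{j+1}}=\beta_{i_j}-c_{m+1}\beta_m$ and $b_{i_{j+1}}=c_{m+1}b_m+b_{i_j}$ from Lemma~\ref{lem:rec}(i), and simplifying with the pairing $\beta_{i_j}b_m+b_{i_j}\beta_m=q$ of Lemma~\ref{lem:rec}(ii). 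Since $C_m^2\geq 0$ by hypothesis, the right-hand side is non-negative, so the intersection form on $\mathrm{Cone}(C_{i_j},C_{i_{j+1}})$ is NSD.

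The main obstacle is producing this identity cleanly; the algebra is essentially routine but requires careful bookkeeping with the continued-fraction recursions. Once every consecutive pair is handled, Proposition~\ref{lem:eff} directly yields $\mathrm{Eff}(\mathrm{Bl}_e\mathbb{P})=\mathrm{Cone}(\mathcal{S})$, and because each generating class is multiplicity one, the cone is generated in multiplicity one.
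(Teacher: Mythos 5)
Your proposal is correct and follows essentially the same route as the paper: the same tuple $\mathcal S$, the same reduction via Proposition~\ref{lem:eff} to negative semi-definiteness on consecutive pairs, the same three cases (pairs with $E$, orthogonal consecutive pairs via Remark~\ref{rem:ort}, and skipped pairs handled through Lemma~\ref{lem:rec}), and your determinant identity $\det = c_{m+1}^2\,C_m^2$ is exactly the paper's formula $\tfrac{c_{j+2}^2}{q}(2\beta_{j+1}b_{j+1}-q)$ in disguise. The algebra you defer is precisely the computation the paper carries out using Lemma~\ref{lem:rec}(iii) together with the recursions~\eqref{eq:ab}, so there is no gap.
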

\begin{proof}
Let us consider the tuple 
$\mathcal S$ consisting of $E$ and
all the $C_i$ with $0\leq i \leq n$,
satisfying $2\beta_ib_i < q$. 
Since $C_i^2 = 2\beta_ib_i/q -1$, 
every $C_i \in \mathcal S$ is the class of a negative curve. Therefore,
by Proposition~\ref{lem:eff}, in order to prove our result it is enough to 
show that 
the intersection matrix is negative semidefinite
on every cone generated by a pair of consecutive 
classes in $\mathcal S$. 

\noindent First of all, $C_0 = qH_2-E$ belongs to $\mathcal S$.
The matrix of the intersection form on ${\rm Cone}( E, C_0 )$
is $ \begin{pmatrix}
 -1 & 1\\
 1 & -1
 \end{pmatrix}$, 
 which is negative semidefinite, and the same holds 
 for ${\rm Cone}(C_{n},E)$.


Let us now fix any index $0\leq j\leq n$, for which 
$2\beta_jb_j < q$, so that $C_j \in\mathcal S$.
If we have that also 
$2\beta_{j+1}b_{j+1} < q$, then the 
next class in $\mathcal S$ is $C_{j+1}$. By Remark~\ref{rem:ort},
the matrix of the intersection form on ${\rm Cone}( C_j,C_{j+1})$ is
diagonal and hence it is negative definite. 
If otherwise $2\beta_{j+1}b_{j+1} \geq q$, by Lemma~\ref{lem:rec}(iv) we must have
$2\beta_{j+2}b_{j+2} < q$, i.e. the next class in $\mathcal S$ is $C_{j+2}$. 
The matrix of the intersection form on ${\rm Cone}( C_j,C_{j+2})$ is
\[
\begin{pmatrix}
\frac{2\beta_{j}b_{j}}q - 1 & \frac{\beta_{j+2}b_{j}+\beta_{j}b_{j+2}}q - 1\\
\frac{\beta_{j+2}b_{j}+\beta_{j}b_{j+2}}q  - 1 & \frac{2\beta_{j+2}b_{j+2}}q - 1
\end{pmatrix}.
\]
Its determinant can be written as
\[
 2\frac{(\beta_{j}-\beta_{j+2})
 (b_{j+2}-b_{j})}q
 -\frac{(b_{j+2}\beta_j - \beta_{j+2}b_j)^2}
 {q^2}
 = \frac{c_{j+2}^2}q(2\beta_{j+1}b_{j+1}-q) 
\]
where the equality follows from 
Lemma~\ref{lem:rec}(iii) 
and~\eqref{eq:ab}.
Since we are supposing that $2\beta_{j+1}b_{j+1} \geq q$, the above determinant 
is non-negative, which implies that the intersection form is negative semi-definite.

We conclude that the tuple 
$\mathcal S$ satisfy the hypotheses of 
Proposition~\ref{lem:eff} so that its classes
are the extremal rays of ${\rm Eff}(
{\rm Bl}_e\mathbb P)$.
\end{proof}

A consequence of the above theorem 
is the following characterisation of
the directions in ${\rm Neg}(\mathbb P)$
and ${\rm wd}(\mathbb P)$ in the cases
we are considering.

\begin{corollary}
With the notation above we have that:
\begin{enumerate}
\item ${\rm Neg}(\mathbb P) = 
    \{v_i\, : \, 0\leq i \leq n \ {\rm and }\ 
    2\beta_ib_i < q\}
=
\{v_i \, : \, C_i^2 < 0\}$;
\item ${\rm wd}(\mathbb P) = 
    \{v_i\, : \, 0\leq i \leq n \ {\rm and }\ 
    2\beta_ib_i \leq q\}
=
\{v_i \, : \, C_i^2 \leq 0\}$.
\end{enumerate}
\end{corollary}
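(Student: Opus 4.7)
The plan combines three ingredients: the self-intersection formula
\[
  C_i^2 \;=\; \frac{2\beta_i b_i}{q} - 1,
\]
which follows by direct computation using the intersection matrix~\eqref{eq:M} and the class expression $C_i = b_i H_1 + \beta_i H_2 - E$ from Construction~\ref{con:conv}; the description of ${\rm Eff}({\rm Bl}_e\mathbb P)$ given by Theorem~\ref{thm:eff}; and the inclusion ${\rm Neg}(\mathbb P) \subseteq {\rm wd}(\mathbb P)$ established in Theorem~\ref{thm:ts}. The self-intersection formula immediately yields the second equality in each item, so it suffices to prove ${\rm Neg}(\mathbb P) = \{v_i : C_i^2 < 0\}$ and ${\rm wd}(\mathbb P) = \{v_i : C_i^2 \leq 0\}$.

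For (i), the inclusion $\supseteq$ is immediate. For $\subseteq$, I would take a primitive $v \in {\rm Neg}(\mathbb P)$: then $\tilde C_v$ is an irreducible curve of negative self-intersection, so its class spans an extremal ray of ${\rm Eff}({\rm Bl}_e\mathbb P)$. By Theorem~\ref{thm:eff} this ray is either $\mathbb R_{\geq 0}[E]$ or $\mathbb R_{\geq 0}[C_i]$ for some $i$ with $C_i^2 < 0$. Since $\tilde C_v \cdot E = 1 \neq 0$, the first option is excluded, so $[\tilde C_v] = C_i$; matching the $H_1$- and $H_2$-coefficients via~\eqref{eq:class}, together with the primitivity of $v$, forces $v = \pm v_i$.

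For (ii), Theorem~\ref{thm:ts} gives the inclusion $\supseteq$ whenever $C_i^2 < 0$. In the boundary case $C_i^2 = 0$, Lemma~\ref{lem:rec}(iv) yields $C_{i \pm 1}^2 < 0$ and Remark~\ref{rem:ort} yields $C_i \cdot C_{i \pm 1} = 0$, so $C_i$ is a nef class sitting on a wall of the nef cone between the two negative extremal rays $[C_{i-1}]$ and $[C_{i+1}]$. Choosing an ample invariant $D$ on $\mathbb P$ whose pullback to the blow-up lies near this wall inside the ample cone will then realize $v_i$ as a width direction of $\Delta_D$. For the inclusion $\subseteq$, if $v \in {\rm wd}(\mathbb P)$, then the minimality argument from the proof of the second inclusion of Theorem~\ref{thm:ts} shows $[C_v]$ is not the sum of another one-parameter subgroup class and a nonzero effective class; together with Theorem~\ref{thm:eff} this forces $[\tilde C_v] = C_i$ for some $i$ with $C_i^2 \leq 0$, so again $v = \pm v_i$ after matching coefficients.

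The main obstacle is the $C_i^2 = 0$ boundary case of (ii): one must both produce an explicit ample invariant $D$ on $\mathbb P$ realizing $v_i$ as a width direction and rule out other primitive directions $w \neq \pm v_i$ with $[\tilde C_w] = C_i$ appearing in ${\rm wd}(\mathbb P)$. This second issue is precisely the degenerate case of Remark~\ref{rem:boundary} where $\Delta_D$ is equivalent to a multiple of the standard lattice cube, and careful analysis of the subdivision $\Sigma_{\rm nef}^{\rm op}(\mathbb P)$ is required: outside this degenerate locus, the $v_i$ with $C_i^2 \leq 0$ should be precisely the directions minimizing $D \cdot C_v$ on full-dimensional cones of the subdivision.
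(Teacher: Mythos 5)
Your self-intersection formula $C_i^2=2\beta_ib_i/q-1$ and your treatment of part (i) match the paper, which indeed obtains (i) directly from Theorem~\ref{thm:eff}. In part (ii), the inclusion $\supseteq$ for the boundary case $C_i^2=0$ is under-specified: a class ``near'' the wall $F_{i-1}\cap F_{i+1}$ but in the interior of an adjacent maximal cone of $\Sigma_{\rm Nef}(\mathbb P)$ has $v_{i-1}$ or $v_{i+1}$ as its \emph{unique} width direction, so $D$ must be taken exactly on the projection of that wall, and one must then verify ${\rm lw}_{v_i}(\Delta_D)={\rm lw}_{v_{i\pm1}}(\Delta_D)$. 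The paper does this with the explicit polytope $\Delta_{m,n}$, $m=b_i$, $n=\beta_i$, for which ${\rm lw}_{v_j}(\Delta_{m,n})=\beta_jm+b_jn$, so that ${\rm lw}_{v_{i\pm1}}=\beta_{i+1}b_i+b_{i+1}\beta_i=q$ by Lemma~\ref{lem:rec}(ii) while ${\rm lw}_{v_i}=2\beta_ib_i$. This part of your sketch is incomplete but repairable.

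The genuine gap is the inclusion $\subseteq$ of (ii). You claim that minimality of $[C_v]$ in ${\rm OP}_{\mathbb P}$, ``together with Theorem~\ref{thm:eff}'', forces $[\tilde C_v]=C_i$ with $C_i^2\le 0$. Minimality can at best force $v$ to be one of the convergent directions $v_i$ (via the best-approximation property of continued fractions, which you would also need to spell out); it cannot exclude the $v_i$ with $C_i^2>0$, because \emph{every} $[C_{v_i}]$ is a minimal generator of ${\rm OP}_{\mathbb P}$: the $b_i$ are strictly increasing and the $\beta_i$ strictly decreasing, so the pairs $(b_i,\beta_i)$ are pairwise incomparable in the effective order, regardless of the sign of $C_i^2$. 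Concretely, for $p=65$, $q=242$ one has $(b_2,\beta_2)=(3,47)$ with $2b_2\beta_2=282>q$, so $C_2^2>0$ and $v_2\notin{\rm wd}(\mathbb P)$ by the corollary, yet $[C_{v_2}]$ is a minimal generator since no one-parameter class $(b',\beta')$ with $(b',\beta')\le(3,47)$ exists other than $(3,47)$ itself. Theorem~\ref{thm:eff} does not close this gap: writing the big class $C_2$ as a nonnegative real combination of $E$ and the negative $C_j$ is not a decomposition $[C_{v_2}]=[C_u]+s$ with $s$ a nonzero effective class on $\mathbb P$. What is actually needed --- and what the paper supplies --- is a cone-by-cone determination of ${\rm wd}$ over $\Sigma_{\rm Nef}(\mathbb P)$: on the interior of the projection of $F_i$ the unique width direction is $v_i\in{\rm Neg}(\mathbb P)$, and on each ray Corollary~\ref{cor:3wd} restricts any further width direction to $\pm(v_{i+1}\pm v_i)$ or $\pm(v_{i+1}\pm v_{i-1})$, which the explicit width computation then rules out unless $2\beta_ib_i=q$. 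Your closing remark correctly senses that the subdivision $\Sigma_{\rm nef}^{\rm op}(\mathbb P)$ must be analyzed, but the obstacle is not only the degenerate hexagon case: it is the exclusion, for every ample $D$, of all non-convergent directions and of the convergents with $C_i^2>0$.
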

\begin{proof}
 The assertion (i) is an immediate consequence of
 Theorem~\ref{thm:eff}. 
 Let us prove (ii).
 First of all we fix an index $0\leq i\leq n$
 such that $v_i\in{\rm Neg}(\mathbb P)$
 and we denote by $F_i$ the facet of ${\rm Nef}({\rm Bl}_e\mathbb P)$ orthogonal to the extremal 
 ray of ${\rm Eff}({\rm Bl}_e\mathbb P)$ 
 generated by $C_{i}$.
 The projection of $F_i$ gives a $2$-dimensional cone of the fan $\Sigma_{{\rm Nef}}(\mathbb P)$
 containing all the ample classes $D$ such that the only width direction of the
 corresponding Riemann-Roch polytope is $v_i$.
 
 In order to conclude the proof we need to show that the only new width directions
 that can occur in correspondence with a 
 an extremal ray $R$ of ${\rm Nef}({\rm Bl}_e\mathbb P)$, are the $v_i$ such that $C_i^2 = 0$,
 i.e. $2b_i\beta_i = q$. We have to distinguish two cases.
 
 \begin{enumerate}
 \item $R = F_i\cap F_{i+1}$. 
 This means that both $C_i$ and $C_{i+1}$ generate
 a ray of ${\rm Eff}({\rm Bl}_e\mathbb P)$, i.e.
 $v_i,\, v_{i+1}\in{\rm Neg}(\mathbb P)
 \subseteq {\rm wd}(\mathbb P)$. 
 Let $D$ be a divisor whose class generates
 the ray $R$. We claim that the Newton polytope
 of $D$ has no other width directions but 
 $v_i$ and $v_{i+1}$. Indeed, let us denote 
 by $\Delta_{m,n}$ the Newton polytope  with 
 vertices $(0,0),\, (mq,-mp),\, (mq,n-mp),\, (0,n)$. Computing the inner product of $v_j$ 
 with the vertices it is easy to see that 
 ${\rm lw}_{v_j}(\Delta_{m,n})
 = \beta_jm+b_jn$, for any $0\leq j\leq n$. 
 When we consider the ray $R = F_i\cap F_{i+1}$, we must have 
 \[
  {\rm lw}_{v_i}(\Delta_{m,n}) = {\rm lw}_{v_{i+1}}(\Delta_{m,n}) = \beta_ib_{i+1} - \beta_{i+1}b_i,
 \]
since $m = b_{i+1} - b_i$ and $n = \beta_i - \beta_{i+1}$. 
By Corollary~\ref{cor:3wd}, if $\Delta_{m,n}$
has a third width direction $v$,
it must be either 
$v = v_{i+1} - v_{i}$ or $v = v_{i+1} + v_i$. In the first case 
we have ${\rm lw}_v(\Delta_{m,n}) = 2\beta_i(b_{i+1}-b_i)$, which 
implies
 \[
 2\beta_i(b_{i+1}-b_i) = \beta_ib_{i+1}-\beta_{i+1}b_i
 \ \Rightarrow \ 2\beta_ib_i = \beta_ib_{i+1}+\beta_{i+1}b_i
 = q,
 \]
 where the last equality follows 
 from Lemma~\ref{lem:rec}(ii). 
 This is equivalent to $C_i^2 = 0$, contradicting the assumption $C_i^2 < 0$. 
 
 In a similar way, if $v = v_{i+1} + v_i$, we have ${\rm lw}_v(\Delta_{m,n}) =
 2b_{i+1}(\beta_i - \beta_{i+1})$ and 
 the equality ${\rm lw}_v(\Delta_{m,n})
 = {\rm lw}_{v_i}(\Delta_{m,n})$  
 implies $2\beta_{i+1} b_{i+1} = q$.
 This is equivalent to $C_{i+1}^2 = 0$,
 again a contradiction.

 \item $R = F_{i-1}\cap F_{i+1}$. In this case, ${\rm lw}_{v_{i+1}}(\Delta_{m,n})
 = {\rm lw}_{v_{i-1}}(\Delta_{m,n})$ implies $m/n = (b_{i+1} - b_{i-1})/(\beta_i - \beta_{i+1}) 
 = b_i/\beta_i$, so that we can set $m = b_i,\, n=\beta_i$ and
 \[ 
 {\rm lw}_{v_{i-1}}(\Delta_{m,n}) =
 {\rm lw}_{v_{i+1}}(\Delta_{m,n}) = \beta_{i+1}b_i + b_{i+1}\beta_i = q,
 \]
again by Lemma~\ref{lem:rec}(ii). 
As before we only have to check  
the directions $v_{i+1} + v_{i-1}$ and
$v_{i+1} - v_{i-1}$. 
We claim that the former can
not give a new width direction for $\Delta_{m,n}$. Indeed both $v_{i-1}$ 
and $v_{i+1}$ lie either in the 
cone having rays $(1,0),\, (p,q)$
or in the cone with rays 
$(-1,0),\, (p,q)$. 
In particular they belong to the
Hilbert basis of the same cone.
Since the sum $v$ of these two
vectors can not belong to the 
Hilbert basis, we conclude that
$v$ can not be a width direction.
Finally, if $v = v_{i+1} - v_{i-1}$,
we have
\[ 
v = (a_{i+1} - a_{i-1}, b_{i+1} - b_{i-1}) = c_{i+1}(a_i,b_i) = c_{i+1}v_i. 
\] 
Therefore it must be $c_{i+1} = 1$ and the width in this direction is 
${\rm lw}_{v_i} = \beta_im + b_in = 2\beta_ib_i$. We conclude that 
$v_i$ is a new width direction if and only if $2\beta_ib_i = q$, which means that $C_i^2 = 0$.
 
 \end{enumerate}

\end{proof}

\begin{remark}
 \label{rem:0}
 The above result implies that if 
 $C_i^2$ never vanishes, then 
 ${\rm wd}(\mathbb P) = {\rm Neg}(\mathbb P) = \{v_i \,:\, C_i^2 < 
 0\}$. We are now going to see that 
 the condition $C_j^2 = 0$ for some 
 $0\leq j \leq n$ turns out to be 
 quite strong. Indeed
 we know that it is equivalent to $2b_j
 \beta_j = q$, so that, by definition of
 $\beta_j$ we can write $pb_j = q a_j + (-1)^j\beta_j$. Therefore
\begin{equation}
 \label{eq:k}
  p  = \frac{q a_j + (-1)^j\beta_j}{b_j} = \frac{\beta_j(2a_jb_j+(-1)^j )}{b_j},
\end{equation}
 so that $b_j$ divides $\beta_j$, but since we are supposing $\gcd(p,q) = 1$,
 it must be $\beta_j = b_j$.
 In particular, since
 $\{b_i\}$ is strictly increasing and $\{\beta_i\}$ is strictly decreasing,
 there can be at most one index $j$ such that $b_j = \beta_j$, i.e. there is
 at most one curve $C_j$ such that 
 $C_j^2 = 0$. 
 Moreover, from $b_j = \beta_j$
 we deduce that $q = 2b_j^2$
 and from~\eqref{eq:k} that 
 $p = 2a_jb_j + (-1)^j$.
 
 On the other hand, let us fix
 two positive integers $r,s$ with
 $\gcd(r,s) = 1$ and $2s \leq r$, and 
 set $q = 2 r^2$ and $p= 2rs \pm 1$.
 Under these hypotheses, 
 by~\cite{Pe}*{Lemma} we have that 
 $s/r$ is a convergent for the continued fraction of $p/q$,
i.e. there exists an index $j$ such that 
$a_j = s$ and $b_j = r$.  
This implies that $\beta_j = r$, 
so that the 
curve $C_j = \beta_j H_1 + b_j H_2 -E$ satisfies $C_j^2 = 0$.

  
 We also remark that the matrix of the
 intersection form on any facet $F$ of ${\rm Eff}({\rm Bl}_e\mathbb P)$ is negative definite, 
 unless $F = {\rm Cone}( E,eH_i-E)$, 
 for $i = 1,2$, or $F = {\rm Cone}( C_{j-1},C_{j+1})$, where 
 $j$ is such that $C_j^2 = 0$. 
 Therefore the light cone $Q$ is tangent to
 ${\rm Eff}({\rm Bl}_e\mathbb P)$ either at the $2$ points $H_1,\, H_2$
 or at the $3$ points $H_1,\, H_2,\, C_j$, if $C_j^2 = 0$.
 
 \end{remark}

\begin{example}
 Let us fix $r = 11,\ s = 3$ 
 and consider the 
 toric surface $X$ with fan generated by
 the rays $(\pm 1,0),\, (\pm p,\pm q)$, where 
 $p = 2rs - 1 = 65$ and 
 $q = 2r^2 = 242$. 
 Let us denote by $c = [ 0, 3, 1, 2, 1, 1, 1, 1, 3 ]$ the continued fraction of $p/q$, so that 
 $b = [ 0, 1, 3, 4, 11, 15, 26, 41, 67, 242 ]$ and 
 $\beta = [ 242, 65, 47, 18, 11, 7, 4, 3, 1, 0 ]$ give rise
 to the classes $C_i = \beta_iH_1 + b_iH_2-E$, for $i = 0,\dots, 9$.
We have that for any $i\neq 2,4,7$, $2b_i\beta_i < q$ while $2b_2\beta_2 > q$, 
$2b_7\beta_7 > q$,
and $\beta_4 = b_4 = 11$, so that $2b_4\beta_4 = q$.
Therefore the light cone is tangent to ${\rm Eff}({\rm Bl}_e\mathbb P)$
at the points corresponding to $H_1,H_2$ and $C_4=11H_1+11H_2-E$
(see Figure~\ref{fig:eff}).

\begin{figure}[htbp]
\hspace{1mm}
\includegraphics[scale=0.26]{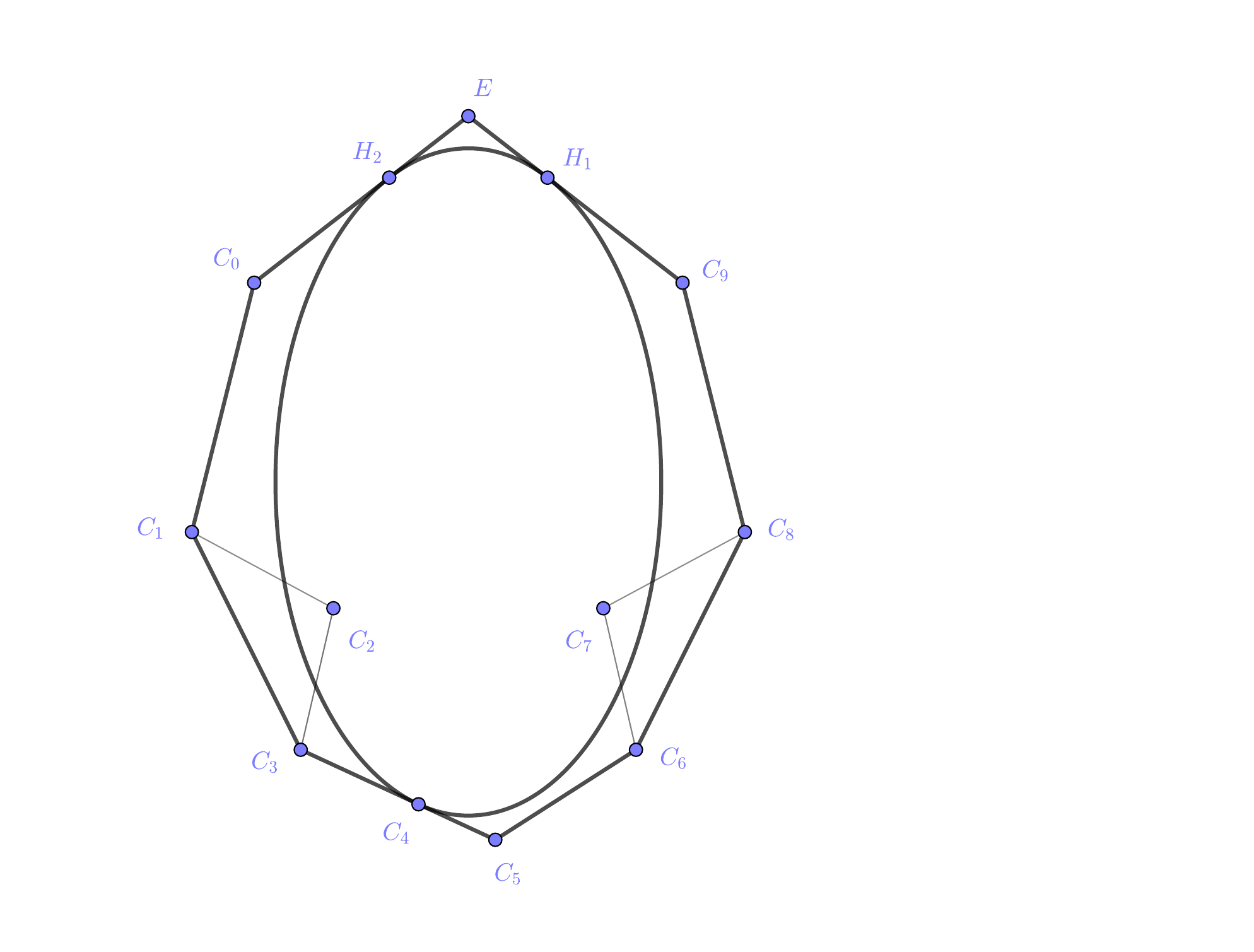}
\caption{${\rm Eff}({\rm Bl}_e\mathbb P)$} 
\label{fig:eff}
\end{figure}

\end{example}

\begin{example}
 \label{ex:fib}
Let us consider the case in which 
$(p,q) 
=
(\varphi_{n-1},
\varphi_{n+1})$, where we
denote by
$\varphi_n$ the $n$-th Fibonacci number\footnote{We
set $\varphi_0 = 0,\,
\varphi_1 
= 1$, and then $\varphi_{n+1}
= \varphi_{n-1}+\varphi_n$, 
for any $n\geq 1$.}, and $n \geq 5$. The continued
fraction of $p/q$ is 
$c = [0,2,1,\dots,1,2]$,
with $|c| = n-1$. 
Therefore $b_0 = 0$,\, 
$b_i = \varphi_{i+1}$ 
for any $1\leq i\leq n-2$,
and $b_{n-1} = \varphi_{n+1}$.
Analogously $\beta_0 = \varphi_{n+1},\,
\beta_i = \varphi_{n-i}$ for any $1\leq i\leq n-2$
and $\beta_{n-1} = 0$.
We claim that $2\beta_ib_i < q$ for any $0\leq i \leq 
n-1$. Indeed, 
if $i = 0$ or $n-1$ 
we have $2\beta_ib_i = 0$,
while if $1\leq i \leq n-2$
we can write 
\begin{align*}
 q - 2\beta_ib_i & = 
  \varphi_{n+1} - 
 2\varphi_{n-i}
 \varphi_{i+1} \\
 & = 
 \varphi_{i+1}\varphi_{n+1-i}
 +\varphi_i\varphi_{n-i}
 - 
 2\varphi_{n-i}
 \varphi_{i+1} \\
 & =
 \varphi_{i+1}\varphi_{n-1-i}
 - 
 \varphi_{i-1}
 \varphi_{n-i} \\
 & = 
 \varphi_{i}\varphi_{n-i-1} - \varphi_{i-1}\varphi_{n-i-2} 
 > 0,
 \end{align*}
where the second equality follows
from~\cite{V}*{Eqn.~(8)}.
Therefore, by Proposition~\ref{thm:eff}, all the classes 
$\beta_iH_1 + b_iH_2 - E$, for $i\in \{0,\dots,n-1\}$ are negative curves,
and, together with $E$, they generate the 
extremal rays of the effective cone. 

In particular this example shows that, given any positive integer $r \geq 3$, 
it is always possible to find a toric surface $\mathbb{P}$ associated to a parallelogram,
such that the effective cone of its blow-up in a general point has $r$ extremal rays. 

\end{example}

\subsection{Cox rings}
We are now going to see that the classes 
$C_i$ defined in Construction~\ref{con:conv} 
can also be used 
to prove Theorem~\ref{thm:main}, i.e.
that the Cox ring
${\rm Bl}_e\mathbb P$ is finitely generated.
\begin{proof}[Proof of Theorem~\ref{thm:main}]
Let us set
$\mathcal S := 
(E,H_1,H_2,C_0,\dots,C_{n})$
(we are considering here also the classes 
$C_i$ with non-negative self intersection).
We are going to use Proposition~\ref{lem:mah},
by showing that $\mathcal S$
is a pseudogenerating tuple for 
${\rm Bl}_e\mathbb P$ (see 
Definition~\ref{def:main}). The first
condition easily follows from
Theorem~\ref{thm:eff}. 
Let us prove the second one, i.e.
that for any class $D$ generating an extremal 
ray of ${\rm Nef}({\rm Bl}_e\mathbb P)$ 
we have that 
\begin{equation}
\label{eq:cond}
D \in \bigcap_{C\in \mathcal S\cap 
D^\perp}{\rm Cone}(\mathcal S\setminus C).
\end{equation}
By Theorem~\ref{thm:eff}, the facets of
${\rm Eff}({\rm Bl}_e\mathbb P)$
are ${\rm Cone}( E,C_0 ),\,{\rm Cone}( C_{n},E)$ 
and, for any $i$ such that 
$2\beta_ib_i < q$, we have either 
the facet ${\rm Cone}( C_i,C_{i+1} )$ (if also $2\beta_{i+1}b_{i+1} < q$) or 
${\rm Cone}( C_i,C_{i+2} )$ (if $2\beta_{i+1}b_{i+1} \geq q$).
Therefore we have to consider $3$ 
different cases.
\begin{center}
\begin{tikzpicture}[scale=0.4]
\begin{scope}
%
%

\tkzDefPoint(0,0){A}
\tkzDefPoint(2,1.5){B}
\tkzDefPoint(5,1.5){C}
\tkzDefPoint(7,0){D}
\tkzDefPoint(-.5,-2){E}
\tkzDefPoint(7.5,-2){F}
\tkzDefPoint(1,.75){G}
\tkzDefPoint(3.5,1.5){H}

\tkzDefPoint(3.5,-2){I}

\draw[line width=0mm,fill=gray!40!white] (-.5,-2) -- (0,0) -- (1,.75) -- (3.5,1.5) -- (7,0) -- (7.5,-2) -- (-.5,-2);

\tkzDrawPoints[size=2](A,B,C,D,G,H);
\tkzDrawSegments[dashed](E,A F,D G,H H,D)
\tkzDrawSegments[](A,B C,D)
\tkzDrawSegments[thick](B,C)

\tkzLabelPoint[left](A){\tiny $C_{n+2}$}
\tkzLabelPoint[left, above](H){\tiny $H_2$}
\tkzLabelPoint[right](C){\tiny $C_0$}
\tkzLabelPoint[left](G){\tiny $H_1$}
\tkzLabelPoint[left](B){\tiny $E$}
\tkzLabelPoint[below](I){\tiny $\color{blue}(i)$}

\end{scope}

\begin{scope}[shift={(10,0)}]
\tkzDefPoint(0,-.2){A}
\tkzDefPoint(1.8,1.5){B}
\tkzDefPoint(5.2,1.5){C}
\tkzDefPoint(7,-.2){D}
\tkzDefPoint(-.5,-2){E}
\tkzDefPoint(7.5,-2){F}
\tkzDefPoint(3.5,1.2){H}

\tkzDefPoint(3.5,-2){I}

\draw[line width=0mm,fill=gray!40!white] (-.5,-2) -- (0,-.2) -- (3.5,1.2) -- (7,-.2) -- (7.5,-2) -- (-.5,-2);

\tkzDrawPoints[size=2](A,B,C,D,H);
\tkzDrawSegments[dashed](E,A A,H H,D D,F)
\tkzDrawSegments[](A,B C,D)
\tkzDrawSegments[thick](B,C)

\tkzLabelPoint[below](H){\tiny $C_{i+1}$}
\tkzLabelPoint[right](C){\tiny $C_{i+2}$}
\tkzLabelPoint[left](B){\tiny $C_i$}

\tkzLabelPoint[below](I){\tiny $\color{blue}(ii)$}

\end{scope}

\begin{scope}[shift={(20,0)}]
\tkzDefPoint(-.5,-2){A}
\tkzDefPoint(.2,0){B}
\tkzDefPoint(2,1.5){C}
\tkzDefPoint(3.5,1.04){D}
\tkzDefPoint(5,1.5){E}
\tkzDefPoint(6.8,0){F}
\tkzDefPoint(7.5,-2){G}
\tkzDefPoint(3.5,-2){I}

\draw[line width=0mm,fill=gray!40!white] (-.5,-2) -- (0.2,0) -- (3.5,1.04) -- (6.8,0) -- (7.5,-2) -- (-.5,-2);


\tkzDrawPoints[size=2](B,C,E,F,D);
\tkzDrawSegments[dashed](A,B B,E C,F F,G)
\tkzDrawSegments[](B,C E,F)
\tkzDrawSegments[thick](C,E)

\tkzLabelPoint[below](D){\tiny $R$}
\tkzLabelPoint[left](B){\tiny $C_{i-1}$}
\tkzLabelPoint[left](C){\tiny $C_{i}$}
\tkzLabelPoint[right](E){\tiny $C_{i+1}$}
\tkzLabelPoint[right](F){\tiny $C_{i+2}$}

\tkzLabelPoint[below](I){\tiny $\color{blue}(iii)$}

\end{scope}

\end{tikzpicture}
\end{center}

\begin{enumerate}
    \item The ray of
${\rm Nef}({\rm Bl}_e\mathbb P)$
orthogonal to the facet ${\rm Cone}( E,C_0) =
{\rm Cone}( E,qH_2 - E)$ is generated by $H_2$. Since the latter
lies in the relative interior of the facet and belongs to the tuple $\mathcal S$,
it lies in 
${\rm Cone}(\mathcal S \setminus E)$ and 
and in ${\rm Cone}(\mathcal S \setminus qH_2-E)$
respectively, so that~\eqref{eq:cond} holds.
We can reason in the same way for the facet 
${\rm Cone}( C_{n},E ) =
{\rm Cone}( qH_1 - E,E )$.
\item By Remark~\ref{rem:ort}, the ray of ${\rm Nef}({\rm Bl}_e\mathbb P)$ orthogonal to the facet ${\rm Cone}( C_i,C_{i+2} )$ 
is generated by $C_{i+1}$. Since the latter belongs to $\mathcal S$, 
condition~\eqref{eq:cond} holds.
\item Finally, let us 
consider a facet 
${\rm Cone}( C_i,C_{i+1} )$. Again 
by Remark~\ref{rem:ort} we have that ${\rm Cone}( C_{i-1},C_{i+1})$ 
is orthogonal to $C_i$, while ${\rm Cone}( C_{i},C_{i+2})$ is orthogonal to
$C_{i+1}$. Therefore, the ray orthogonal to the facet 
${\rm Cone}( C_i,C_{i+1} )$ is the intersection 
${\rm Cone}( C_{i-1},C_{i+1}) \cap {\rm Cone}( C_{i},C_{i+2})$.
We conclude that~\eqref{eq:cond} holds also for this facet.
\end{enumerate}

\end{proof}

\begin{remark}
\label{rem:m1}
The above proof relies on the results of Lemma~\ref{lem:mah}.
As we already pointed out in Remark~\ref{rem:gen}, this
allows to show that the Cox ring of 
${\rm Bl}_e\mathbb P$ is finitely generated,
but it does not give any clue on its generators. 
Therefore, even if we know that the effective cone is generated
in multiplicity $1$, we can not deduce that the same holds for
the Cox ring.
Nevertheless, since the first row of
the matrix~\eqref{eq:mat} corresponds
to the monomial $x_1x_2^p-x_3x_4^p$
in the lattice ideal $I_{\mathbb P}$, we can conclude that for any three 
indexes $i,j,k\in\{1,2,3,4\}$, 
$I_{\mathbb P}
\not \subseteq \langle x_i,x_j,x_k
\rangle^2$. Therefore we have the 
following.
\end{remark}
\begin{corollary}
\label{cor:m1}
Let $\mathbb P$ be a minimal toric surface
of Picard rank two.
If Conjecture~\ref{con:m1}
holds, then the Cox ring 
${\rm Cox}({\rm Bl}_e\mathbb P)$
is generated in multiplicity $1$.
\end{corollary}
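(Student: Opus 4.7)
The plan is to verify the hypothesis of Conjecture~\ref{con:m1} for $\mathbb P$; combined with the conjecture itself (assumed to hold), this immediately yields the desired conclusion. By the normal form \eqref{eq:mat}, the primitive generators of the rays of the fan of $\mathbb P$ are $v_1=(1,0)$, $v_2=(p,q)$, $v_3=-v_1$, $v_4=-v_2$, with $\gcd(p,q)=1$ and $0<p\leq q/2$. The lattice of relations $L\subseteq\mathbb Z^4$ is the image of the map $P_\Sigma\colon \mathbb Z^2\to\mathbb Z^4$ given by $(m_1,m_2)\mapsto(m_1,\,m_1p+m_2q,\,-m_1,\,-m_1p-m_2q)$. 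The two rows of \eqref{eq:mat} are its natural generators, producing in $I_{\mathbb P}$ the binomials $x_1x_2^p-x_3x_4^p$ and $x_2^q-x_4^q$; moreover, a further element of $L$ is $(q,0,-q,0)=q\cdot(1,p,-1,-p)-p\cdot(0,q,0,-q)=P_\Sigma(q,-p)$, which yields the binomial $x_1^q-x_3^q\in I_{\mathbb P}$.

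To check condition (ii) of Conjecture~\ref{con:m1}, I would fix any triple $\{i,j,k\}\subset\{1,2,3,4\}$ and exhibit one of these three binomials as an element of $I_{\mathbb P}$ not contained in $\langle x_i,x_j,x_k\rangle^2$. Since this ideal is monomial, a binomial $x^u-x^v$ lies in it precisely when both monomials have total degree at least $2$ in the variables $x_i,x_j,x_k$. Let $\ell$ denote the index excluded from $\{i,j,k\}$. If $\ell\in\{1,3\}$, then the monomial $x_\ell^q$ appearing in $x_1^q-x_3^q$ has degree $0$ in $\{x_i,x_j,x_k\}$, so this binomial is not in $\langle x_i,x_j,x_k\rangle^2$; if instead $\ell\in\{2,4\}$, the same argument applies to $x_2^q-x_4^q$. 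This handles all four cases, so condition (ii) holds and Conjecture~\ref{con:m1} delivers the conclusion.

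The verification is essentially combinatorial, and the one conceptual point is the identification of the auxiliary binomials $x_1^q-x_3^q$ and $x_2^q-x_4^q$. These are forced by the central symmetry $v_1+v_3=0$ and $v_2+v_4=0$ of the ray configuration, which places the elements $(q,0,-q,0)$ and $(0,q,0,-q)$ in $L$. The reason some extra care is needed here is that the binomial $x_1x_2^p-x_3x_4^p$ singled out in Remark~\ref{rem:m1} covers all four cases only when $p=1$; for $p\geq 2$ both of its monomials lie in $\langle x_2,x_3,x_4\rangle^2$ and in $\langle x_1,x_2,x_4\rangle^2$, so the additional witnesses are genuinely needed to complete the check. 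Beyond this small observation no real obstacle remains.
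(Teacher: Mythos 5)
Your proof is correct and follows the same overall strategy as the paper: the paper's Remark~\ref{rem:m1} verifies condition (ii) of Conjecture~\ref{con:m1} for the fan given by \eqref{eq:mat} and then invokes the conjecture, exactly as you do. The difference is in the execution, and here your version is the more careful one. The paper's remark asserts that the single binomial $x_1x_2^p-x_3x_4^p$ (coming from the first row of \eqref{eq:mat}) already shows $I_{\mathbb P}\not\subseteq\langle x_i,x_j,x_k\rangle^2$ for every triple; as you observe, this witness only works for all four triples when $p=1$, since for $p\geq 2$ both of its monomials lie in $\langle x_2,x_3,x_4\rangle^2$ and in $\langle x_1,x_2,x_4\rangle^2$. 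Your additional witnesses $x_2^q-x_4^q$ and $x_1^q-x_3^q$ do lie in $I_{\mathbb P}$, because $(0,q,0,-q)$ and $(q,0,-q,0)=q\,(1,p,-1,-p)-p\,(0,q,0,-q)$ belong to the lattice $L_\Sigma$ and the lattice ideal is generated by the binomials of \emph{all} lattice elements, not just of a basis; and the criterion you use (a binomial lies in a monomial ideal if and only if both of its monomials do) is sound. So your argument closes a small gap in the paper's own justification rather than introducing one, while reaching the same conclusion by the same route.
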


\section{A Magma library}
\label{sec:ex}
In this section we present a library
of Magma functions~\cite{mag} that we used in
this paper and we explain some of 
them in one example.
The library is freely downloadable
from this web site:
\begin{center}
\url{https://github.com/alaface/Blowing-up-toric-surfaces}
\end{center}

The main functions of this library are briefly
described here.

\begin{itemize}  
  \item {\tt TestMultOne}. It takes as input
  a list of primitive lattice vectors of $\mathbb Z^2$ generating a fan $\Sigma$ of
  a complete toric surface $\mathbb P$. 
  It returns {\tt true} if the Cox ring of 
  ${\rm Bl}_e\mathbb P$ is generated in multiplicity one.
  
  \item {\tt TestComp}. It takes as input
  a list of primitive lattice vectors of $\mathbb Z^2$ generating a fan $\Sigma$ of
  a complete toric surface $\mathbb P$. 
  It returns {\tt true} if the lattice ideal of
  $\mathbb P$ is not contained in any square of ideals generated by subsets of three variables.
    
  \item {\tt GensUpTo}. It takes as input
  a list of primitive lattice vectors of $\mathbb Z^2$ generating a fan $\Sigma$ of
  a complete toric surface $\mathbb P$ and an integer $m$.
  It returns the
  divisor classes of generators of the Cox ring of ${\rm Bl}_e\mathbb P$ up to  multiplicity $m$.
      
  \item {\tt IsMDS}. It takes as input
  a list of primitive lattice vectors of $\mathbb Z^2$ generating a fan $\Sigma$ of
  a complete toric surface $\mathbb P$ and an integer $m$.
  It returns {\tt true} if the Cox ring of 
  ${\rm Bl}_e\mathbb P$ admits a pseudogenerating tuple in  multiplicity up to $m$.
\end{itemize}

\begin{example}
Let $\mathbb P := \mathbb P_{\Sigma}$
be the toric surface whose fan
is generated by all the primitive
lattice vectors of $\mathbb Z^2$ with
coordinates having absolute value at most $2$.
We display the rays in the following picture.

\begin{center}
\begin{tikzpicture}[scale=0.6]
  \tkzDefPoint(0,0){O}
  \foreach \x/\y/\name/\ind/\pos in {
  1/0/A1/1/right, 
  2/1/A2/2/right, 
  1/1/A3/3/above right, 
  1/2/A4/4/above right, 
  0/1/A5/5/above, 
  -1/2/A6/6/above left, 
  -1/1/A7/7/above left, 
  -2/1/A8/8/left, 
  -1/0/A9/9/left, 
  -2/-1/A10/10/left, 
  -1/-1/A11/11/below left, 
  -1/-2/A12/12/below left, 
  0/-1/A13/13/below, 
  1/-2/A14/14/below right, 
  1/-1/A15/15/below right,
  2/-1/A16/16/right}{
    \tkzDefPoint(\x,\y){\name}
    \tkzLabelPoint[\pos](\name){\tiny $x_{\ind}$}
    \tkzDrawSegment (O,\name)
    \tkzDrawPoint[size=2](\name)
    }
\draw[densely dotted] (-2.2,-2.2) grid (2.2,2.2);
\end{tikzpicture}
\end{center}
Using {\tt TestComp} we found that 
all the triples
$\{i,j,k\}$ such that $I_\mathbb P\subseteq
\langle x_i,x_j,x_k\rangle^2$ are
$\{2,6,12\}$, $\{2,7,12\}$,
$\{2,8,12\}$, $\{4,10,14\}$, $\{4,10,15\}$,
$\{4,10,16\}$, $\{6,10,16\}$, $\{6,11,16\}$,
$\{6,12,16\}$, $\{2,8,14\}$, $\{3,8,14\}$,
$\{4,8,14\}$.
The maximal subsets of rays which do not 
contain these triples are the following
\[
\begin{array}{ll}
    \{ 1, 3, 4, 5, 7, 9, 11, 12, 13, 14, 15, 16 \}&
    \{ 1, 2, 3, 4, 5, 6, 7, 9, 13, 14, 15, 16 \}\\
    \{ 1, 3, 5, 7, 8, 9, 10, 11, 12, 13, 15, 16 \}&
    \{ 1, 2, 3, 4, 5, 7, 9, 11, 13, 14, 15, 16 \}\\
    \{ 1, 3, 5, 7, 9, 10, 11, 12, 13, 14, 15, 16 \}&
    \{ 1, 2, 3, 4, 5, 7, 8, 9, 11, 13, 15, 16 \}\\
    \{ 1, 2, 3, 5, 6, 7, 8, 9, 10, 11, 13, 15 \}&
    \{ 1, 2, 3, 5, 6, 7, 9, 10, 11, 13, 14, 15 \}\\
    \{ 1, 5, 6, 7, 8, 9, 10, 11, 12, 13, 14, 15 \}&
    \{ 1, 2, 3, 5, 9, 10, 11, 12, 13, 14, 15, 16 \}\\
    \{ 1, 3, 4, 5, 7, 8, 9, 11, 12, 13, 15, 16 \}&
    \{ 1, 2, 3, 4, 5, 9, 10, 11, 12, 13 \}\\
    \{ 1, 3, 4, 5, 6, 7, 8, 9, 11, 12, 13, 15 \}&
    \{ 1, 2, 3, 4, 5, 6, 7, 8, 9, 13, 15, 16 \}\\
    \{ 1, 5, 7, 8, 9, 10, 11, 12, 13, 14, 15, 16 \}&
    \{ 1, 5, 6, 7, 8, 9, 13, 14, 15, 16 \}\\
    \{ 1, 3, 5, 6, 7, 9, 10, 11, 12, 13, 14, 15 \}&
    \{ 1, 3, 4, 5, 6, 7, 8, 9, 10, 11, 12, 13 \}\\
    \{ 1, 2, 3, 4, 5, 9, 11, 12, 13, 14, 15, 16 \}&
    \{ 1, 3, 5, 6, 7, 8, 9, 10, 11, 12, 13, 15 \}\\
    \{ 1, 2, 3, 4, 5, 6, 7, 8, 9, 10, 11, 13 \}&
    \{ 1, 2, 3, 5, 7, 9, 10, 11, 13, 14, 15, 16 \}\\
    \{ 1, 2, 3, 5, 7, 8, 9, 10, 11, 13, 15, 16 \}&
    \{ 1, 2, 3, 4, 5, 6, 7, 9, 11, 13, 14, 15 \}\\
    \{ 1, 2, 3, 4, 5, 6, 7, 8, 9, 11, 13, 15 \}&
    \{ 1, 3, 4, 5, 6, 7, 9, 11, 12, 13, 14, 15 \}
\end{array}
\]

Using {\tt TestMultOne} we proved that 
the Cox ring of ${\rm Bl}_e\mathbb P'$
is generated in multiplicity one for each
toric surface $\mathbb P'$ whose rays
of the fan are given by one of the above subsets.
The same
conclusion holds for any toric surface
dominated by any such $\mathbb P'$, i.e. whose fan
consists of a subset of those rays.
In particular Conjecture~\ref{con:m1} holds
for all these toric surfaces.
Using {\tt GensUpTo} we found four generators
in multiplicity $2$ for the Cox ring of 
${\rm Bl}_e\mathbb P'$, where $\mathbb P'$
is the toric surface defined by the rays
$\{2,4,6,8,10,12,14,16\}$. Finally, using
{\tt IsMDS} we checked that the Cox ring of
${\rm Bl}_e\mathbb P'$ is finitely generated.
\end{example}

\bibliographystyle{plain}
\bibliography{ref.bib}

\end{document}